\documentclass{siamonline250211}

\usepackage[T1]{fontenc}
\usepackage{amsfonts,amssymb,mathtools,mathrsfs,bm}
\usepackage{booktabs,array}
\usepackage{enumitem}
\allowdisplaybreaks[3]
\newsiamremark{remark}{Remark}
\crefname{remark}{Remark}{Remarks}
\renewcommand{\keywordsname}{Keywords}
\renewcommand{\MSCcodesname}{MSC(2020)}

\newcommand{\CC}{\mathbb C}

\newcommand{\PP}{\mathbb P}
\newcommand{\Gr}{\operatorname{Gr}}
\newcommand{\Pl}{\operatorname{Pl}}
\newcommand{\Hom}{\operatorname{Hom}}
\newcommand{\Span}{\operatorname{span}}
\newcommand{\GL}{\operatorname{GL}}
\newcommand{\Sym}{\operatorname{Sym}}
\newcommand{\rank}{\operatorname{rank}}
\newcommand{\Sing}{\operatorname{Sing}}
\newcommand{\cK}{\mathcal K}
\newcommand{\cC}{\mathcal C}
\newcommand{\cU}{\mathcal U}
\newcommand{\dd}{\mathrm d}

\title{Grassmann--Pl\"ucker Parametrization of Convolutional Filter Subspaces:\
Regularity and Closed Embeddings}
\author{H.~Yuan\thanks{Zhili College, Tsinghua University, Beijing, 100084,
P.~R.~China. Email address: \email{yuanhy24@mails.tsinghua.edu.cn}.}
\and H.~Zuo \thanks{Department of Mathematical Sciences, Tsinghua University,
Beijing, 100084, P.~R.~China. Email address: \email{hqzuo@mail.tsinghua.edu.cn}.\\
Corresponding author: Huaiqing Zuo.\\
Data availability: Data sharing not applicable to this article as no datasets were generated or analysed during the current study.
}
}

\headers{Grassmann--Pl\"ucker Filter Subspaces}{H.~Yuan and H.~Zuo}

\begin{document}
\maketitle

\begin{abstract}
We propose a geometric parametrization of the filters in a single convolutional
layer: the parameter is no longer an ordered family of filter vectors, but a
fixed-dimensional subspace of the filter space. For one-dimensional finite-stride
convolution, the filter-to-convolution-operator correspondence gives an injective
linear map $\cC:\cK\to H$. This map sends filter subspaces in $\Gr(q,\cK)$ to
operator subspaces in $\Gr(q,H)$; composing it with the Pl\"ucker embedding yields
a projective parametrization $\Phi:\Gr(q,\cK)\to\PP(\wedge^qH)$. Using
$T_U\Gr(q,\cK)\cong\Hom(U,\cK/U)$, we compute the differential of
$\Gamma_{\cC}$ and prove that the differential of $\Phi$ is injective at every
point. We then use the vanishing equations for Pl\"ucker coordinates and the
standard affine coordinates on a Grassmannian to prove that the sub-Grassmannian
$\Gr(q,\cC(\cK))\hookrightarrow\Gr(q,H)$ is a closed embedding, and hence that
$\Phi$ is a closed embedding. Consequently, the parameter space is isomorphic
to its projective image, the parametrization is finite and birational onto its
image, every fiber is a singleton, and the resulting projective neural variety
is smooth. For the first nontrivial case $k=4$ and $q=2$, we also use Singular
to eliminate the source Pl\"ucker coordinates and recover the image ideal
directly, checking its dimension, degree, chart rank, and smoothness. This
symbolic computation is a low-dimensional illustration rather than a substitute
for the general proof. Finally, we discuss possible connections with filter
redundancy and low-rank convolution, while distinguishing the geometric results
proved here from application proposals that still require numerical validation.
\end{abstract}

\begin{keywords}
convolutional neural networks, Grassmannian, Pl\"ucker embedding, neural variety,
closed embedding, finite birational map, low-rank representation.
\end{keywords}

\begin{MSCcodes}
14M15, 68T07.
\end{MSCcodes}

\section{Introduction}

We study a projective parametrization of a class of subspaces of single-layer
convolution operators. A traditional single-filter parameter is a vector in the
filter space, whereas here the parameter is generalized to a fixed-dimensional
subspace of that space. Let $\cK$ be the filter space and let $H$ be the space of
linear operators from the input space to the output space. Convolution gives a
linear map
\[
  \cC:\cK\longrightarrow H.
\]
For $U\in\Gr(q,\cK)$, its image $\cC(U)$ is a linear subspace of $H$. If $\cC$
is injective, then $\dim\cC(U)=q$, and hence there is a map
\[
  \Gamma_{\cC}:\Gr(q,\cK)\longrightarrow\Gr(q,H),
  \qquad U\longmapsto\cC(U).
\]
Composing it with the Pl\"ucker embedding gives the parametrization studied in
this paper:
\[
  \Phi=\Pl_H\circ\Gamma_{\cC}:
  \Gr(q,\cK)\longrightarrow\PP(\wedge^qH).
\]

The body of the paper addresses three questions in order. First, we construct
$\cC$ directly from the finite-stride convolution formula and prove that $\cC$
is injective. Second, we compute the differentials of $\Gamma_{\cC}$ and $\Phi$
and prove that the differential of $\Phi$ is injective at every point. Third,
setting $W=\cC(\cK)$, we use Pl\"ucker coordinates to prove that the natural
inclusion
\[
  \Gr(q,W)\longrightarrow\Gr(q,H)
\]
is a closed embedding, from which the closed-embedding property of $\Phi$
follows. Finiteness, birationality, uniqueness of fibers, and smoothness of the
image are then consequences of the closed-embedding theorem.
In addition, for the concrete choice $k=4,q=2,d=5,d'=2,s=1$, we use Singular
to eliminate the six source Pl\"ucker coordinates from the graph ideal, obtain
the homogeneous ideal of the projective image, and compare the computation
term by term with the main theorem.

The main result is stated explicitly as follows.

\begin{theorem}[Main theorem]\label{thm:main}
Let $\cK=\CC^k$, let $H=\Hom(E,F)$, and let
$\cC:\cK\to H$ be the linear map induced by the finite-stride convolution in
Definition~\ref{def:convolution}. For every $1\leq q\leq k$, the parametrization
\[
  \Phi=\Pl_H\circ\Gamma_{\cC}:
  \Gr(q,\cK)\longrightarrow\PP(\wedge^qH)
\]
is a closed embedding. If $X=\Gr(q,\cK)$ and $Y=\Phi(X)$, then
$\Phi:X\xrightarrow{\sim}Y$ is an isomorphism. In particular, $\Phi$ is finite
and birational onto its image, $\#\Phi^{-1}(y)=1$ for every $y\in Y$, and
\[
  \dim Y=q(k-q),\qquad \Sing(Y)=\varnothing.
\]
\end{theorem}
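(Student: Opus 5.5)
The plan is to factor $\Phi$ as a composition of closed embeddings and read off the remaining assertions from the isomorphism $X\cong Y$. Write $W=\cC(\cK)\subseteq H$. The first step is to check, directly from the finite-stride convolution formula in Definition~\ref{def:convolution}, that $\cC$ is injective. If $\cC(w)=0$, evaluate the operator on standard basis vectors $e_j$ of $E$. The output coordinate with index $0$ then reads off $w_j$ for each $j$ in the filter support, provided the input dimension is at least $k$. Hence $w=0$. Consequently $\cC:\cK\xrightarrow{\sim}W$ is a linear isomorphism, and it induces an isomorphism of varieties $\Gr(q,\cK)\xrightarrow{\sim}\Gr(q,W)$, $U\mapsto\cC(U)$. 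In Plücker coordinates this isomorphism is the restriction of the projective linear isomorphism $\PP(\wedge^q\cK)\to\PP(\wedge^qW)$ induced by $\wedge^q\cC$. Thus $\Gamma_{\cC}=\iota\circ\Gr(q,\cC)$, where $\iota:\Gr(q,W)\to\Gr(q,H)$ is the natural inclusion.

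The second step, which I expect to carry the main content, is showing that $\iota$ is a closed embedding. Choose a complement $H=W\oplus W'$ and bases adapted to it, so that $\wedge^qW\subseteq\wedge^qH$ is the coordinate subspace spanned by the wedges $e_I$ with $I\subseteq\{1,\dots,m\}$, where $m=\dim W$. I will show that
\[
  \Pl_H\bigl(\Gr(q,W)\bigr)=\Pl_H\bigl(\Gr(q,H)\bigr)\cap\PP(\wedge^qW),
\]
which means that $\Gr(q,W)$ is cut out inside $\Gr(q,H)$ by the linear equations $p_I=0$ for all $I\not\subseteq\{1,\dots,m\}$. The inclusion $\subseteq$ is clear. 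For $\supseteq$, take $V$ with all such $p_I=0$ and some $p_{I_0}\neq0$, where $I_0\subseteq\{1,\dots,m\}$. In the standard affine chart $\cU_{I_0}$, write $V$ as the row space of a matrix with identity in the columns $I_0$. Each entry outside those columns equals, up to sign, a Plücker coordinate $p_{I_0\setminus\{a\}\cup\{b\}}/p_{I_0}$. For $b>m$ this index set is not contained in $\{1,\dots,m\}$, so the entry vanishes and $V\subseteq W$. The same chart computation shows more: on $\cU_{I_0}$ the scheme $\Gr(q,H)\cap\{p_I=0\}$ is the linear subspace of $q\times(\dim H-q)$ matrices with zero columns outside $W$. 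That subspace is the standard chart of $\Gr(q,W)$, so $\iota$ is locally a closed immersion of affine spaces, i.e.\ the scheme-theoretic intersection is reduced, not merely correct set-theoretically. Since the Plücker embedding $\Pl_H$ is itself a closed embedding, $\Phi=\Pl_H\circ\iota\circ\Gr(q,\cC)$ is a composition of closed embeddings, hence a closed embedding.

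As a consistency check, the injectivity of $\dd\Phi$ computed earlier via $T_U\Gr(q,\cK)\cong\Hom(U,\cK/U)$ agrees with this: a closed embedding must be injective on tangent spaces. Should the scheme-theoretic claim in the chart argument need backup, the differential computation together with injectivity and properness would supply it.

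Finally, the consequences follow from the isomorphism $\Phi:X\xrightarrow{\sim}Y$ onto the closed subvariety $Y$.
\begin{itemize}
  \item A closed embedding is finite, because closed immersions are finite morphisms.
  \item It is birational onto its image, since it is an isomorphism onto $Y$.
  \item Every fiber over $y\in Y$ is a single point, by bijectivity.
  \item $\dim Y=\dim\Gr(q,\CC^k)=q(k-q)$.
  \item $\Sing(Y)=\varnothing$, because $Y$ is isomorphic to the smooth Grassmannian $X$.
\end{itemize}
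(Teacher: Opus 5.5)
Your proposal is correct and follows essentially the same route as the paper: injectivity of $\cC$ from the zeroth output coordinate, then $\Gr(q,\cK)\cong\Gr(q,W)$, then the sub-Grassmannian closed embedding cut out by the vanishing of all Pl\"ucker coordinates with an index outside $W$ and checked in standard charts, then composition with $\Pl_H$, with the corollaries read off from $X\cong Y$. The differences are cosmetic: you justify $\Gr(q,\cK)\cong\Gr(q,W)$ through the projective linear isomorphism induced by $\wedge^q\cC$ rather than graph coordinates, and you state the reducedness of the scheme-theoretic intersection explicitly, which the paper leaves implicit in its chart map $A\mapsto(A,0)$.
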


The closed-embedding assertion in the main theorem is proved in
Theorem~\ref{thm:Phi-closed}; its finiteness, birationality, uniqueness of
fibers, dimension, and smoothness statements are established in
Corollaries~\ref{cor:finite}--\ref{cor:Y-smooth}.

The family of functions realized by a neural network architecture can be studied
as the image of a parameter-to-function map. In the related literature, such an
image is often called a \emph{neuromanifold}, or a \emph{neural variety} in the
algebraic setting~\cite{APO06,KMMT22,SMK25}. Function spaces, singularities, and
critical points of loss functions for linear convolutional networks have been
studied from an algebraic-geometric perspective~\cite{KMMT22,KMST24}.
Shahverdi, Marchetti, and Kohn further studied polynomial convolutional networks
with monomial activation: their projective parametrization factors through a
Segre--Veronese embedding and is regular and finite birational; they also discuss
the dimension, degree, and singularities of the neural variety and critical
points of a regression loss~\cite[Secs.~3.1, 4.1, and 4.2]{SMK25}. We adopt their
notation for one-dimensional convolution, but study a different object: a
higher-dimensional filter subspace is the single-layer parameter, and we record
the complete Pl\"ucker coordinates of the corresponding operator subspace.

We consider only this single-layer linear subspace of convolution operators and
its complete Pl\"ucker representation. Our theorems make no claims about nonlinear
activations, multilayer compositions, loss functions, or performance in actual
training.

\section{One-Dimensional Convolution and the Grassmann--Pl\texorpdfstring{\"u}{u}cker Parametrization}

\subsection{Finite-stride convolution}

All vector spaces in this paper are finite-dimensional complex vector spaces.
Fix positive integers $k,d,d'$, and $s$, and assume that
\begin{equation}\label{eq:length-relation}
  d=s(d'-1)+k.
\end{equation}
Here $k$ is the filter length, $d$ is the input length, $s$ is the stride, and
$d'$ is the output length. Set
\[
  \cK=\CC^k,\qquad E=\CC^d,\qquad F=\CC^{d'}.
\]
We index coordinates from $0$. This convention agrees with the one-dimensional
valid convolution used in the literature on polynomial convolutional networks
\cite[Sec.~3.1, Eq.~(1)]{SMK25}.

\begin{definition}[Finite-stride convolution]\label{def:convolution}
For a filter
\[
  w=(w_0,\ldots,w_{k-1})\in\cK
\]
and an input
\[
  x=(x_0,\ldots,x_{d-1})\in E,
\]
define $C_wx\in F$ by
\begin{equation}\label{eq:convolution}
  (C_wx)_i=\sum_{j=0}^{k-1}w_jx_{si+j},
  \qquad 0\leq i\leq d'-1.
\end{equation}
By \eqref{eq:length-relation}, if $0\leq i\leq d'-1$ and
$0\leq j\leq k-1$, then
\[
  0\leq si+j\leq s(d'-1)+k-1=d-1,
\]
so every input coordinate in \eqref{eq:convolution} is defined.
\end{definition}

Write
\[
  H=\Hom(E,F).
\]
Convolution induces the map
\begin{equation}\label{eq:C-map}
  \cC:\cK\longrightarrow H,
  \qquad w\longmapsto C_w.
\end{equation}

\begin{proposition}[Linearity and injectivity of the convolution map]
\label{prop:C-injective}
The map $\cC$ defined in \eqref{eq:C-map} is an injective linear map.
\end{proposition}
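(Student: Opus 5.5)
The plan is to verify the two properties separately, directly from formula \eqref{eq:convolution}.

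\emph{Linearity.} For each fixed $w\in\cK$, the map $x\mapsto C_wx$ is linear in $x$, since every coordinate $(C_wx)_i$ is a linear combination of the coordinates of $x$. Hence $C_w\in H=\Hom(E,F)$ and $\cC$ is well defined. For $w,w'\in\cK$ and $\lambda,\mu\in\CC$, the expression $\sum_j w_jx_{si+j}$ is linear in the coefficients $w_j$. It follows that
\[
  C_{\lambda w+\mu w'}x=\lambda C_wx+\mu C_{w'}x
\]
for every $x\in E$. So $\cC$ is linear.

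\emph{Injectivity.} Since $\cC$ is linear, it suffices to show that $\ker\cC=0$. I will test $C_w$ on standard basis vectors $e_0,\ldots,e_{d-1}$ of $E$ and read off the $i=0$ output coordinate. Taking $x=e_j$ with $0\le j\le k-1$ gives
\[
  (C_we_j)_0=\sum_{l=0}^{k-1}w_l(e_j)_l=w_j.
\]
The index $j$ is a valid input index because $k-1\le d-1$, which follows from \eqref{eq:length-relation} since $s(d'-1)\ge 0$. Output index $0$ exists because $d'\ge1$. So if $C_w=0$, then $w_j=0$ for every $j$, and hence $w=0$.

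Equivalently, the first row of the matrix of $C_w$ is $(w_0,\ldots,w_{k-1},0,\ldots,0)$, so $w$ can be recovered from $C_w$.

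There is no real obstacle here. The only point needing care is the index bookkeeping: one must confirm, via \eqref{eq:length-relation} and the positivity of $k,d',s$, that the row $i=0$ and the columns $0,\ldots,k-1$ actually exist.
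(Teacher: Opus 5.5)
Your proposal is correct and follows the paper's proof essentially verbatim: linearity comes from the linearity of $\sum_j w_jx_{si+j}$ in the coefficients $w_j$, and injectivity comes from evaluating $C_w$ on $e_j$ and reading off the zeroth output coordinate $(C_we_j)_0=w_j$. Your explicit check that $k-1\le d-1$ and $d'\ge 1$, so that these indices exist, is a small addition the paper leaves implicit.
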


\begin{proof}
Let $a,b\in\CC$, $w,w'\in\cK$, and $x\in E$. For every
$0\leq i\leq d'-1$,
\[
\begin{aligned}
  \bigl(C_{aw+bw'}x\bigr)_i
  &=\sum_{j=0}^{k-1}(aw_j+bw'_j)x_{si+j}\\
  &=a\sum_{j=0}^{k-1}w_jx_{si+j}
    +b\sum_{j=0}^{k-1}w'_jx_{si+j}\\
  &=\bigl(aC_wx+bC_{w'}x\bigr)_i.
\end{aligned}
\]
Thus $C_{aw+bw'}=aC_w+bC_{w'}$, so $\cC$ is linear.

We next prove injectivity. Let $w\in\ker\cC$, so that $C_w=0$. For every
$0\leq j\leq k-1$, let $e_j\in E$ be the $j$th standard basis vector. By
\eqref{eq:convolution}, the zeroth output coordinate satisfies
\[
  (C_we_j)_0
  =\sum_{\ell=0}^{k-1}w_\ell(e_j)_\ell
  =w_j.
\]
Because $C_w=0$, we have $(C_we_j)_0=0$, and therefore $w_j=0$. This holds for
every $0\leq j\leq k-1$, so $w=0$. Hence
\[
  \ker\cC=\{0\}.
\]
\end{proof}

\begin{remark}\label{rem:no-faithfulness-assumption}
In the concrete convolution model of Definition~\ref{def:convolution}, the
injectivity of $\cC$ is a conclusion of Proposition~\ref{prop:C-injective}, not
an additional assumption. The geometric theorems below are stated for arbitrary
injective linear maps, but when they are applied to the convolution considered
here, the required injectivity has already been proved.
\end{remark}

\subsection{Filter-subspace parameters}

\begin{definition}[Grassmannian]\label{def:grassmannian}
Let $V$ be an $n$-dimensional complex vector space, and let $1\leq q\leq n$.
Write
\[
  \Gr(q,V)=\{U\subseteq V: U\text{ is a }q\text{-dimensional linear subspace}\}.
\]
Endowed with its standard algebraic-variety structure, this is the Grassmannian
of $q$-planes in $V$.
\end{definition}

Fix $1\leq q\leq k$. We take
\[
  X=\Gr(q,\cK)
\]
as the parameter space. A point $U\in X$ represents a $q$-dimensional subspace
of the filter space.

\begin{definition}[Convolution-induced Grassmann map]\label{def:Gamma}
Define
\begin{equation}\label{eq:Gamma}
  \Gamma_{\cC}:\Gr(q,\cK)\longrightarrow\Gr(q,H),
  \qquad U\longmapsto\cC(U).
\end{equation}
\end{definition}

\begin{lemma}[Well-definedness of $\Gamma_{\cC}$]
\label{lem:Gamma-well-defined}
For every $U\in\Gr(q,\cK)$, one has $\dim\cC(U)=q$. Hence
\eqref{eq:Gamma} indeed takes values in $\Gr(q,H)$.
\end{lemma}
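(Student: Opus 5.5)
The argument reduces to the injectivity of $\cC$ established in Proposition~\ref{prop:C-injective}, together with the elementary fact that an injective linear map preserves the dimension of every subspace. First we observe that $\cC(U)$ is a linear subspace of $H$, since $U$ is a subspace of $\cK$ and $\cC$ is linear. Next we fix a basis $u_1,\ldots,u_q$ of $U$, which exists because $U\in\Gr(q,\cK)$, and show that $\cC(u_1),\ldots,\cC(u_q)$ is a basis of $\cC(U)$.

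For spanning, note that every element of $\cC(U)$ has the form $\cC(u)$ with $u=\sum_i a_iu_i$. By linearity,
\[
  \cC(u)=\sum_{i=1}^q a_i\,\cC(u_i).
\]
For linear independence, suppose $\sum_i a_i\cC(u_i)=0$. Linearity rewrites this as $\cC\bigl(\sum_i a_iu_i\bigr)=0$. Since $\ker\cC=\{0\}$ by Proposition~\ref{prop:C-injective}, we get $\sum_i a_iu_i=0$, and the independence of the $u_i$ forces every $a_i=0$. Hence $\dim\cC(U)=q$.

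It remains to confirm that $\cC(U)$ lies in $\Gr(q,H)$, which requires $q\le\dim H$. This follows because $\cC(U)\subseteq H$ is a $q$-dimensional subspace. Alternatively, $q\le k=\dim\cC(\cK)\le\dim H=dd'$. So $\Gr(q,H)$ is defined in the sense of Definition~\ref{def:grassmannian}, and $\Gamma_{\cC}(U)=\cC(U)$ is one of its points.

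I do not expect a genuine obstacle here. The only point needing care is to invoke injectivity from Proposition~\ref{prop:C-injective}, rather than assume it, as Remark~\ref{rem:no-faithfulness-assumption} emphasizes. It is also worth noting that the argument uses only linearity and injectivity, so it applies verbatim to any injective linear map $\cK\to H$.
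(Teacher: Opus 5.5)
Your proposal is correct and follows essentially the same approach as the paper. Both arguments rest on the injectivity of $\cC$ from Proposition~\ref{prop:C-injective}; the paper applies rank--nullity to $\cC|_U$, while you check directly that a basis of $U$ maps to a basis of $\cC(U)$, which is the same argument written out.
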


\begin{proof}
By Proposition~\ref{prop:C-injective}, the restriction
\[
  \cC|_U:U\longrightarrow H
\]
is injective. The rank--nullity theorem gives
\[
  \dim\cC(U)
  =\dim U-\dim\ker(\cC|_U)
  =q-0=q.
\]
Thus $\cC(U)\in\Gr(q,H)$.
\end{proof}

\subsection{The Pl\texorpdfstring{\"u}{u}cker parametrization}

\begin{definition}[Pl\texorpdfstring{\"u}{u}cker map]\label{def:Plucker}
Let $V$ be a finite-dimensional complex vector space. For
\[
  U=\Span(u_1,\ldots,u_q)\in\Gr(q,V),
\]
define
\begin{equation}\label{eq:Plucker}
  \Pl_V(U)=[u_1\wedge\cdots\wedge u_q]
  \in\PP(\wedge^qV).
\end{equation}
\end{definition}

If $v_1,\ldots,v_q$ is another basis of $U$, then there exists
$A\in\operatorname{GL}_q(\CC)$ such that
\[
  (v_1,\ldots,v_q)=(u_1,\ldots,u_q)A.
\]
Alternating multilinearity of the exterior product gives
\[
  v_1\wedge\cdots\wedge v_q
  =\det(A)\,u_1\wedge\cdots\wedge u_q.
\]
Since $\det(A)\neq0$, the two vectors determine the same projective point, and
thus \eqref{eq:Plucker} is independent of the choice of basis.

\begin{definition}[Grassmann--Pl\texorpdfstring{\"u}{u}cker parametrization and neural variety]
\label{def:Phi}
Define
\begin{equation}\label{eq:Phi}
  \Phi=\Pl_H\circ\Gamma_{\cC}:
  \Gr(q,\cK)\longrightarrow\PP(\wedge^qH).
\end{equation}
Its image
\begin{equation}\label{eq:Y}
  Y=\Phi\bigl(\Gr(q,\cK)\bigr)
\end{equation}
is called the Grassmann--Pl\"ucker neural variety of this single-layer
convolutional model.
\end{definition}

If $u_1,\ldots,u_q$ is a basis of $U$, then
\begin{equation}\label{eq:Phi-basis}
  \Phi(U)
  =\bigl[\cC(u_1)\wedge\cdots\wedge\cC(u_q)\bigr].
\end{equation}

The linear map $\cC$ induces a linear map
\[
  \wedge^q\cC:\wedge^q\cK\longrightarrow\wedge^qH,
\]
whose value on a decomposable vector is
\[
  (\wedge^q\cC)(u_1\wedge\cdots\wedge u_q)
  =\cC(u_1)\wedge\cdots\wedge\cC(u_q).
\]
Because $\cC$ is injective, $\wedge^q\cC$ is also injective and can be
projectivized to give
\[
  \PP(\wedge^q\cC):
  \PP(\wedge^q\cK)\longrightarrow\PP(\wedge^qH).
\]
Equation~\eqref{eq:Phi-basis} gives the commutative identity
\begin{equation}\label{eq:commuting-identity}
  \boxed{
  \Pl_H\circ\Gamma_{\cC}
  =\PP(\wedge^q\cC)\circ\Pl_{\cK}.}
\end{equation}
Thus ``first send the filter subspace to the operator space and then apply the
Pl\"ucker embedding'' and ``first take the Pl\"ucker coordinates of the filter
subspace and then apply the linear map induced on the exterior power'' give the
same parametrization.

\section{Local Coordinates on Grassmannians and the Pl\texorpdfstring{\"u}{u}cker Embedding}

This section recalls the standard facts about Grassmannians, tangent spaces, and
the Pl\"ucker embedding that will be needed below. We give direct proofs of the
local-coordinate and tangent-space statements used later, while citing the
classical Pl\"ucker closed-embedding theorem. For background, see Harris
\cite[Lecture~6, pp.~63--67]{Har92}; for the quotient-space representation,
horizontal tangent spaces, and numerical matrix models of the Grassmann
manifold, see Edelman, Arias, and Smith
\cite[Secs.~2.3.2 and 2.5]{EAS98}.

\subsection{Standard affine coordinates on a Grassmannian}

Let $V$ be an $n$-dimensional complex vector space with fixed ordered basis
$e_1,\ldots,e_n$. For a $q$-element index set
\[
  I=\{i_1<\cdots<i_q\}\subseteq\{1,\ldots,n\},
\]
set
\[
  V_I=\Span(e_{i_1},\ldots,e_{i_q}),
  \qquad
  V_{I^c}=\Span(e_j:j\notin I).
\]
Then $V=V_I\oplus V_{I^c}$. Let
\[
  \pi_I:V\longrightarrow V_I
\]
denote the projection along $V_{I^c}$.

\begin{definition}[Standard Grassmann open set]\label{def:grassmann-chart}
Define
\[
  \cU_I
  =\{U\in\Gr(q,V):\pi_I|_U:U\to V_I\text{ is an isomorphism}\}.
\]
\end{definition}

\begin{proposition}[Graph coordinates]\label{prop:graph-chart}
The map
\[
  \Hom(V_I,V_{I^c})\longrightarrow\cU_I,
  \qquad
  A\longmapsto\operatorname{Graph}(A)
\]
is an isomorphism of affine varieties, where
\[
  \operatorname{Graph}(A)=\{u+A(u):u\in V_I\}.
\]
In particular,
\[
  \cU_I\cong\CC^{q(n-q)}.
\]
\end{proposition}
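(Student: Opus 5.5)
We prove Proposition~\ref{prop:graph-chart} by writing down the map and an explicit inverse, checking that both are morphisms, and checking that they are mutually inverse. The first step is to see that the map lands in $\cU_I$. For $A\in\Hom(V_I,V_{I^c})$, the set $\operatorname{Graph}(A)$ is the image of the injective linear map $u\mapsto u+A(u)$ from $V_I$ to $V$. It is therefore a $q$-dimensional subspace. Since $\pi_I(u+A(u))=u$, the restriction $\pi_I|_{\operatorname{Graph}(A)}$ is an isomorphism onto $V_I$, so $\operatorname{Graph}(A)\in\cU_I$.

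Next we build the inverse. Given $U\in\cU_I$, let $\sigma_U=(\pi_I|_U)^{-1}:V_I\to U$, and set $A_U=(\mathrm{id}-\pi_I)\circ\sigma_U\in\Hom(V_I,V_{I^c})$. Then $\sigma_U(u)=u+A_U(u)$, so $U=\operatorname{Graph}(A_U)$. Conversely, $A_{\operatorname{Graph}(A)}=A$, because the section of $\pi_I$ on $\operatorname{Graph}(A)$ is exactly $u\mapsto u+A(u)$. This gives a bijection.

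The remaining point, and the only nontrivial one, is that both maps are morphisms for the standard variety structure on $\Gr(q,V)$. We use the structure coming from the Pl\"ucker embedding, in which $\cU_I$ is the open set where the Pl\"ucker coordinate $p_I$ is nonzero. In the basis $(e_i)$, a subspace $U$ is the row space of a $q\times n$ matrix $M$. The condition $U\in\cU_I$ says exactly that the minor $M_I$, formed by the columns indexed by $I$, is invertible. Replacing $M$ by $M_I^{-1}M$ gives the unique representative whose $I$-columns form the identity; its remaining columns are the matrix of $A_U$.

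With this normalization, the forward map $A\mapsto[\wedge\text{ of the rows of }(\mathrm{Id}\,|\,A)]$ has Pl\"ucker coordinates equal to minors of $(\mathrm{Id}\,|\,A)$. These are polynomials in the entries of $A$, and the $I$-th coordinate is identically $1$, so the forward map is a morphism into $\{p_I\neq0\}$. For the inverse, each entry $a_{rj}$ of $A_U$ is $\pm p_{I\setminus\{i_r\}\cup\{j\}}/p_I$, by Cramer's rule: the minor of $M_I^{-1}M$ obtained by swapping column $i_r$ for column $j$ equals $\pm a_{rj}$. This is a regular function on $\{p_I\neq0\}$, so the inverse is a morphism as well.

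Combining these steps, the map $A\mapsto\operatorname{Graph}(A)$ is an isomorphism of affine varieties. Identifying $\Hom(V_I,V_{I^c})$ with $q\times(n-q)$ matrices then gives $\cU_I\cong\CC^{q(n-q)}$. The main care needed is the sign and indexing bookkeeping in the Cramer's-rule identity for $a_{rj}$; everything else is linear algebra.
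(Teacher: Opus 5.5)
Your proof is correct. The set-theoretic half is the same as the paper's: your $(\mathrm{id}-\pi_I)\circ\sigma_U$ is exactly the paper's $A_U=\pi_{I^c}\circ(\pi_I|_U)^{-1}$. The regularity half takes a different route.

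The paper settles regularity in one sentence: in the matrix-entry coordinates, both maps are ``the matrix entries and their identical recovery.'' That only has content if the variety structure on $\cU_I$ is defined through this same chart. As written, the step is nearly tautological. It never compares that structure with the Pl\"ucker structure of Theorem~\ref{thm:Plucker-closed}. If the structure is instead defined by gluing charts, the transition maps would need checking, and the paper does not do this.

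You instead fix an independent structure, the one induced from $\PP(\wedge^qV)$, in which $\cU_I=\{p_I\neq0\}$. You then verify both directions explicitly. The forward map has polynomial Pl\"ucker coordinates with $p_I\equiv1$. The inverse is given by $a_{rj}=\pm p_{I\setminus\{i_r\}\cup\{j\}}/p_I$, which are regular functions on $\{p_I\neq0\}$ by Cramer's rule. This gives the proposition real content.

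Your route costs two things. It relies on the Pl\"ucker image being a subvariety, which the paper cites anyway. It also requires sign bookkeeping, which the paper itself does later in Lemma~\ref{lem:subgrassmannian-closed}, with sign $(-1)^{q-r}$ for $I=\{1,\ldots,q\}$.

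Your formulas give slightly more than you use. In the affine chart $\{p_I\neq0\}$, every other coordinate $p_J/p_I$ of the Pl\"ucker image is a polynomial in the entries $\pm p_{I\setminus\{i_r\}\cup\{j\}}/p_I$. So the image is the graph of a polynomial map, hence closed in that chart. That is the chartwise content of Theorem~\ref{thm:Plucker-closed}, so one extra sentence would make your argument independent of that citation.
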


\begin{proof}
If $A\in\Hom(V_I,V_{I^c})$, then
\[
  \pi_I(u+A(u))=u
\]
for every $u\in V_I$. Hence
\[
  \pi_I|_{\operatorname{Graph}(A)}:
  \operatorname{Graph}(A)\longrightarrow V_I
\]
has inverse $u\mapsto u+A(u)$, and therefore
$\operatorname{Graph}(A)\in\cU_I$.

Conversely, if $U\in\cU_I$, define
\[
  A_U=\pi_{I^c}\circ(\pi_I|_U)^{-1}:V_I\longrightarrow V_{I^c},
\]
where $\pi_{I^c}:V\to V_{I^c}$ is the projection along $V_I$. For $v\in U$,
put $u=\pi_I(v)$. Then
\[
  v=u+\pi_{I^c}(v)=u+A_U(u),
\]
and therefore $U=\operatorname{Graph}(A_U)$. By construction,
$A\mapsto\operatorname{Graph}(A)$ and $U\mapsto A_U$ are inverse maps.

With respect to the fixed bases, the matrix entries of $A$ give $q(n-q)$ affine
coordinates. In these coordinates the two maps above are given, respectively,
by the matrix entries and their identical recovery; hence both are regular.
\end{proof}

\begin{corollary}[Dimension and smoothness]\label{cor:Gr-smooth-dim}
$\Gr(q,V)$ is a smooth projective variety of dimension $q(n-q)$.
\end{corollary}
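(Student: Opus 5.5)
The plan is to read off both smoothness and the dimension count from the atlas of standard affine charts $\{\cU_I\}$, using Proposition~\ref{prop:graph-chart}. The projectivity will come from the classical Pl\"ucker closed-embedding theorem cited from \cite{Har92}.

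\emph{Covering.} First, the open sets $\cU_I$, for $I$ ranging over the $q$-element subsets of $\{1,\ldots,n\}$, cover $\Gr(q,V)$. Given $U$, I choose a basis and form the $n\times q$ matrix $M$ of coordinate columns. Since $\rank M=q$, some $q\times q$ minor $\det M_I$ is nonzero. This says exactly that $\pi_I|_U$ is injective, hence an isomorphism onto $V_I$, so $U\in\cU_I$.

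\emph{Local structure.} By Proposition~\ref{prop:graph-chart}, each $\cU_I\cong\CC^{q(n-q)}$, which is smooth, irreducible, and of dimension $q(n-q)$. Smoothness and local dimension are local properties, so $\Gr(q,V)$ is smooth and every component has dimension $q(n-q)$.

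\emph{Irreducibility.} I will check that any two charts meet: the subspace spanned by $\sum_j e_j$-type generic vectors lies in every $\cU_I$. Alternatively, $\Gr(q,V)$ is the image of the irreducible open set of full-rank $n\times q$ matrices. Either way, the space is irreducible of dimension $q(n-q)$.

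\emph{Projectivity.} The classical theorem says $\Pl_V:\Gr(q,V)\to\PP(\wedge^qV)$ is a closed embedding whose image is cut out by the Pl\"ucker relations. Hence $\Gr(q,V)$ is isomorphic to a closed subvariety of projective space, i.e., projective.

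\emph{Compatibility check.} I expect the main point needing care to be that the chart isomorphisms of Proposition~\ref{prop:graph-chart} agree with the variety structure coming from the Pl\"ucker embedding. To verify this, I note that $\Pl_V(\cU_I)=\Pl_V(\Gr(q,V))\cap\{p_I\neq0\}$. On this locus the entries of $A$ are the ratios $\pm p_{J}/p_I$ with $|J\triangle I|=2$, and conversely every Pl\"ucker coordinate $p_J/p_I$ is a minor of the matrix $\binom{\mathrm{Id}}{A}$, which is polynomial in the entries of $A$. Thus the two structures coincide on each chart, and the smoothness computed in the charts is smoothness of the projective variety.
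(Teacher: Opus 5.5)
Your proposal is correct and takes the same route as the paper. You cover $\Gr(q,V)$ by the standard charts $\cU_I\cong\CC^{q(n-q)}$ of Proposition~\ref{prop:graph-chart} to get smoothness and dimension, and you invoke the Pl\"ucker closed embedding (Theorem~\ref{thm:Plucker-closed}) for projectivity; your extra steps (the nonvanishing-minor argument that the charts cover, irreducibility, which the paper proves separately in Lemma~\ref{lem:Gr-irreducible}, and the check that the chart structure matches the Pl\"ucker structure) only fill in details the paper leaves implicit.
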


\begin{proof}
The standard open sets $\cU_I$ cover $\Gr(q,V)$, and by
Proposition~\ref{prop:graph-chart}, each $\cU_I$ is isomorphic to
$\CC^{q(n-q)}$. Thus every point of $\Gr(q,V)$ has an open neighborhood
isomorphic to a smooth affine space, so $\Gr(q,V)$ is smooth of dimension
$q(n-q)$. Its projectivity follows from the Pl\"ucker closed embedding in
Theorem~\ref{thm:Plucker-closed}.
\end{proof}

\begin{lemma}[Irreducibility of the Grassmannian]\label{lem:Gr-irreducible}
$\Gr(q,V)$ is an irreducible algebraic variety.
\end{lemma}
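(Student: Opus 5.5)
To prove Lemma~\ref{lem:Gr-irreducible}, I would use the standard affine charts of Proposition~\ref{prop:graph-chart} and the following topological fact: if a nonempty space is covered by irreducible open subsets, any two of which meet, then it is irreducible. Each $\cU_I\cong\CC^{q(n-q)}$ is irreducible, because affine space is irreducible. So the work is to show that any two standard open sets $\cU_I$ and $\cU_J$ intersect, and then to run the covering argument.

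\emph{Step 1 (pairwise intersection).} Fix $q$-element index sets $I,J$. I would exhibit one $U\in\cU_I\cap\cU_J$. The condition $U\in\cU_I$ says that the $q\times q$ minor of a basis matrix of $U$ on the rows $I$ is nonzero, and similarly for $J$. In the chart $\cU_I\cong\Hom(V_I,V_{I^c})\cong\CC^{q(n-q)}$, the $J$-minor of the basis matrix of $\operatorname{Graph}(A)$ is a polynomial $f_J(A)$ in the entries of $A$. This polynomial is not identically zero. For instance, choose a bijection $\sigma:I\setminus J\to J\setminus I$ and let $A$ send $e_i\mapsto e_{\sigma(i)}$ for $i\in I\setminus J$ and $e_i\mapsto 0$ for $i\in I\cap J$. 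The rows of the basis matrix indexed by $J$ then form a permutation matrix, so $f_J(A)=\pm1$. Hence $\cU_I\cap\cU_J$ is a nonempty open subset of $\cU_I$, and therefore dense in $\cU_I$.

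\emph{Step 2 (irreducibility).} Suppose $\Gr(q,V)=Z_1\cup Z_2$ with $Z_1,Z_2$ closed. Fix $I_0$. Since $\cU_{I_0}$ is irreducible, it lies in one of them, say $Z_1$, so $\overline{\cU_{I_0}}\subseteq Z_1$. For any other $J$, the set $\cU_J$ is irreducible, so it lies in $Z_1$ or in $Z_2$. If $\cU_J\subseteq Z_2$, then the nonempty open set $\cU_{I_0}\cap\cU_J$ of $\cU_{I_0}$ lies in $Z_2$. It is dense in $\cU_{I_0}$, so $\cU_{I_0}\subseteq Z_2$. Then $\cU_{I_0}\subseteq Z_1\cap Z_2$, and, by the symmetric density argument inside $\cU_J$, also $\cU_J\subseteq Z_1$. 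In either case $\cU_J\subseteq Z_1$ for every $J$. Since the $\cU_J$ cover $\Gr(q,V)$, we get $\Gr(q,V)=Z_1$.

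An alternative route is to write $\Gr(q,V)$ as the image of the irreducible open set of full-rank $n\times q$ matrices (an open subset of $\CC^{nq}$) under the surjective morphism taking the column span. The image of an irreducible space is irreducible. I would keep the chart argument, since it uses only Proposition~\ref{prop:graph-chart}.

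The only real point is Step 1, the explicit nonvanishing of the $J$-minor on $\cU_I$. The permutation-matrix construction settles it directly; the rest is routine topology.
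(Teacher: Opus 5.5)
Your proof is correct, but it takes a different route from the paper's. The paper uses the group action. For fixed $U_0$, the orbit map $\GL(V)\to\Gr(q,V)$, $g\mapsto g(U_0)$, is surjective because $\GL(V)$ acts transitively on $q$-planes. Since $\GL(V)$ is irreducible (it is the principal open set $\det\neq0$ in $\operatorname{End}(V)$), irreducibility passes to the continuous image. Your ``alternative route'' via the column-span map on full-rank $n\times q$ matrices is essentially this argument: take the first $q$ columns of $g$ when $U_0=V_{\{1,\ldots,q\}}$.

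Your main argument instead uses only the atlas of Proposition~\ref{prop:graph-chart}. Each $\cU_I$ is an irreducible open set. Your permutation-matrix choice of $A$ correctly shows that the $J$-minor $f_J$ is a nonzero polynomial on $\cU_I$, so $\cU_I\cap\cU_J$ is nonempty and dense in both charts, and the covering argument then goes through. Step~2 can be shortened: since $\cU_{I_0}\subseteq Z_1$ and $\cU_{I_0}\cap\cU_J$ is dense in $\cU_J$, you get $\cU_J\subseteq Z_1$ directly, without the case split.

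The paper's proof is shorter and generalizes to any homogeneous space of a connected group. However, it takes for granted that the orbit map is a morphism (hence Zariski-continuous). Verifying this means writing the chart coordinates of $g(U_0)$ as ratios of minors of $g$. Your proof avoids that step and relies only on the chart structure the paper sets up and the polynomial nature of the minors. The price is the explicit overlap computation.
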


\begin{proof}
Fix $U_0\in\Gr(q,V)$. The algebraic group $\operatorname{GL}(V)$ acts on
$\Gr(q,V)$ by
\[
  (g,U)\longmapsto g(U).
\]
For an arbitrary $U\in\Gr(q,V)$, choose bases of $U_0$ and $U$ and extend each
to a basis of $V$. There is then some $g\in\operatorname{GL}(V)$ with
$g(U_0)=U$. Consequently, the orbit map
\[
  \operatorname{GL}(V)\longrightarrow\Gr(q,V),
  \qquad g\longmapsto g(U_0)
\]
is surjective. The group $\operatorname{GL}(V)$ is the nonempty principal open
subset of the affine space $\operatorname{End}(V)$ defined by $\det\neq0$, so it
is irreducible. The image of an irreducible space under a continuous map is
irreducible, and hence $\Gr(q,V)$ is irreducible.
\end{proof}

\subsection{The tangent space}

\begin{theorem}[Tangent space of a Grassmannian]\label{thm:tangent-Gr}
For every $U\in\Gr(q,V)$, there is a natural linear isomorphism
\begin{equation}\label{eq:tangent-Gr}
  T_U\Gr(q,V)\cong\Hom(U,V/U).
\end{equation}
\end{theorem}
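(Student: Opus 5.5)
The plan is to compute the tangent space in the graph chart of Proposition~\ref{prop:graph-chart} and then show that the result does not depend on the choices made. Fix $U\in\Gr(q,V)$ and choose a complement $U'$ with $V=U\oplus U'$. After a change of the ordered basis, we may take $U=V_I$ and $U'=V_{I^c}$. Concretely, take a basis of $U$ and extend it to a basis of $V$, which is legitimate since $\Gr(q,V)$ is defined intrinsically. The open set $\cU_I$ is then an affine neighborhood of $U$ with $U=\operatorname{Graph}(0)$. By Proposition~\ref{prop:graph-chart}, $\cU_I\cong\Hom(U,U')$, which is an affine space, so its tangent space at $0$ is canonically the vector space $\Hom(U,U')$ itself. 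Since $\cU_I$ is open, this gives
\[
  T_U\Gr(q,V)\cong\Hom(U,U').
\]
The projection $V\to V/U$ restricts to an isomorphism $U'\to V/U$. Composing with it yields an isomorphism
\[
  \theta_{U'}:T_U\Gr(q,V)\to\Hom(U,V/U).
\]

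It is convenient to describe $\theta_{U'}$ by curves. A tangent vector is represented by $t\mapsto \operatorname{Graph}(tA)$ for $A\in\Hom(U,U')$. For $u\in U$, the point $u+tA(u)$ lies in the moving subspace. Its derivative at $t=0$ is $A(u)$, and its class modulo $U$ is $\theta_{U'}(A)(u)$. The intrinsic description I aim for is the following. Let $U_t$ be any regular curve through $U$, and let $u(t)\in U_t$ be a regular lift with $u(0)=u$. Then $\varphi(u)=u'(0)\bmod U$. This is independent of the lift, because two lifts differ by a curve in $U_t$ that vanishes at $0$, and the derivative of such a curve lies in $U$.

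The main obstacle is naturality: showing that $\theta_{U'}$ does not depend on the chosen complement or basis. I would handle this directly with a second complement $U''$. In that case $U''=\operatorname{Graph}(B)$ for some $B\in\Hom(U',U)$, that is, $U''=\{w+Bw: w\in U'\}$. The transition map between the two charts near $U$ sends $A\in\Hom(U,U')$ to $A''=(1+BA)^{-1}$-corrected data; explicitly, $\operatorname{Graph}(A)=\operatorname{Graph}(A'')$ with $A''\in\Hom(U,U'')$ computed by solving $u+Au=\tilde u+A''\tilde u$. The transition is a regular map. Its differential at $0$ sends $A$ to $\psi\circ A$, where $\psi:U'\to U''$, $w\mapsto w+Bw$, because the correction terms are quadratic in $A$. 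The elements $w$ and $w+Bw$ have the same image in $V/U$. Hence $\theta_{U''}\circ(\text{differential of the transition})=\theta_{U'}$, and the identification is independent of the choice.

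Finally, I would note $GL(V)$-equivariance: $g$ induces $\varphi\mapsto \bar g\circ\varphi\circ g^{-1}|_{gU}$. This follows from the curve description and justifies the word ``natural''. As a sanity check on dimensions, $\dim\Hom(U,V/U)=q(n-q)$, which agrees with Corollary~\ref{cor:Gr-smooth-dim}.
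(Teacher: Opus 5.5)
Your proposal is correct and follows essentially the paper's route: you identify $T_U\Gr(q,V)$ with $\Hom(U,U')$ through the graph chart of Proposition~\ref{prop:graph-chart}, then compose with the isomorphism $U'\to V/U$ induced by the quotient map. Your transition-map computation makes rigorous the independence of the complement, which the paper only sketches by comparing lifts $\widetilde A:U\to V$ of a first-order family modulo $U$; note that with your convention $U''=\{w+Bw\}$ the transition is $A''=\psi\circ A\circ(1-BA)^{-1}$ rather than something involving $(1+BA)^{-1}$, a sign slip that does not affect the differential at $0$.
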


\begin{proof}
Choose a complementary subspace $L\subseteq V$ such that
\[
  V=U\oplus L.
\]
By Proposition~\ref{prop:graph-chart}, an open neighborhood of $U$ is isomorphic
to $\Hom(U,L)$, with $U$ corresponding to the zero map. Therefore
\[
  T_U\Gr(q,V)\cong T_0\Hom(U,L)=\Hom(U,L).
\]
If $\pi:V\to V/U$ is the quotient map, its restriction
\[
  \pi|_L:L\longrightarrow V/U
\]
is a linear isomorphism, and hence it induces
\[
  \Hom(U,L)\xrightarrow{\sim}\Hom(U,V/U),
  \qquad A\longmapsto\pi|_L\circ A.
\]
The composition of these two isomorphisms gives \eqref{eq:tangent-Gr}.

We verify that this isomorphism is independent of the chosen complement. In any
chosen graph coordinates, a tangent vector is represented by a first-order
family of subspaces
\[
  U_t=\{u+t\widetilde A(u):u\in U\},
\]
where $\widetilde A:U\to V$ is linear. Its image in $\Hom(U,V/U)$ is
\[
  u\longmapsto[\widetilde A(u)].
\]
If $\widetilde A'$ determines the same first-order family, then
$\widetilde A(u)-\widetilde A'(u)\in U$, so
\[
  [\widetilde A(u)]=[\widetilde A'(u)]\in V/U.
\]
Thus the resulting element is independent of both the lift and the complement.
\end{proof}

\subsection{Pl\texorpdfstring{\"u}{u}cker coordinates and the classical embedding}

Continue to use the fixed basis $e_1,\ldots,e_n$ of $V$. If a basis of
$U\in\Gr(q,V)$ is arranged as the rows of a full-rank matrix
\[
  M\in\operatorname{Mat}_{q\times n}(\CC),
\]
then for every $q$-element index set $I\subseteq\{1,\ldots,n\}$, define
\begin{equation}\label{eq:Plucker-coordinate}
  p_I(U)=\det(M_I),
\end{equation}
where $M_I$ is the $q\times q$ submatrix formed by the columns indexed by $I$.
Replacing the basis matrix $M$ by $GM$, where
$G\in\operatorname{GL}_q(\CC)$, multiplies every $p_I$ by $\det(G)$. Therefore
\[
  [p_I(U)]_{|I|=q}
\]
is a well-defined system of projective coordinates, and it agrees with the
exterior-product coordinates in Definition~\ref{def:Plucker}.

If an index is repeated, we set the corresponding
$p_{i_1\cdots i_q}$ equal to zero; interchanging two indices changes its sign.
The following classical theorem is used as a cited result and is not reproved
here; see
\cite[Lecture~6, pp.~63--67]{Har92}.

\begin{theorem}[Pl\texorpdfstring{\"u}{u}cker closed embedding]
\label{thm:Plucker-closed}
The Pl\"ucker map
\[
  \Pl_V:\Gr(q,V)\longrightarrow\PP(\wedge^qV)
\]
is a closed embedding. Its image is the projective Grassmann variety, classically
defined by the quadratic Pl\"ucker relations.
\end{theorem}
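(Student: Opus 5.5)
The statement just given is the classical Plücker closed-embedding theorem, which the paper cites from Harris rather than reproving. Below is how I would prove it directly in the standard affine coordinates of Proposition~\ref{prop:graph-chart}. The first step is to check that $\Pl_V$ is a morphism. On each chart $\cU_I$, choose the basis matrix $M=(\mathrm{Id}_q\mid A)$, with columns reordered so that those indexed by $I$ come first. Every coordinate $p_J(M)$ is then a polynomial in the entries of $A$, and $p_I(M)=1$. So $\Pl_V$ maps $\cU_I$ into the standard affine open set $\{p_I\neq0\}\subseteq\PP(\wedge^qV)$ by a polynomial map. These local descriptions agree on overlaps because of the basis-independence shown after Definition~\ref{def:Plucker}.

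The second step, which is the key one, is to show that $\Pl_V^{-1}(\{p_I\neq0\})=\cU_I$ and that on this set $\Pl_V$ has a regular left inverse. The equality holds because $p_I(U)\neq0$ says exactly that $\pi_I|_U$ is an isomorphism. For the left inverse, observe that for $M=(\mathrm{Id}_q\mid A)$ the entry $a_{rj}$, where $r\in\{1,\dots,q\}$ and $j\notin I$, equals, up to sign, the Plücker coordinate $p_{(I\setminus\{i_r\})\cup\{j\}}$. This is the minor obtained by replacing the $r$th identity column with column $j$. Dividing by $p_I$ gives the regular map $[p]\mapsto(\pm p_{(I\setminus\{i_r\})\cup\{j\}}/p_I)_{r,j}$ on $\{p_I\neq0\}$. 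Composed with $\Pl_V$, it returns $A$.

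This shows that $\Pl_V|_{\cU_I}$ is an isomorphism onto its image $Z_I$, which is closed in $\{p_I\neq0\}$. Indeed, $Z_I$ is the graph-type locus where every $p_J/p_I$ equals the polynomial in the $a_{rj}$ predicted by the minors of $(\mathrm{Id}_q\mid A)$, and the $a_{rj}$ are themselves the ratios just described. Since the opens $\{p_I\neq0\}$ cover the image, closedness in each chart gives closedness of $\Pl_V(\Gr(q,V))$ in $\PP(\wedge^qV)$. The local isomorphisms glue to an isomorphism onto the image, so $\Pl_V$ is a closed embedding. Injectivity is automatic, because $U$ is recovered as the graph of $A$.

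For the final sentence, on the image being the locus of the quadratic Plücker relations, I would show two things. First, decomposable vectors satisfy the relations $\sum_{\ell}(-1)^\ell p_{i_1\cdots i_{q-1}j_\ell}\,p_{j_0\cdots\widehat{j_\ell}\cdots j_q}=0$; this follows from Laplace expansion of a determinant with a repeated block of rows. Second, if $[p]$ satisfies the relations and $p_I\neq0$, normalize to $p_I=1$ and form $A$ from the $q\times(n-q)$ ratios above. The relations then force every other $p_J$ to equal the corresponding minor of $(\mathrm{Id}_q\mid A)$. I would prove this by induction on $|J\setminus I|$, expanding along the rows via the quadratic relation with $I$ as pivot. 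This induction is the main obstacle, since it requires careful sign bookkeeping. For the closed-embedding assertion itself the chart argument suffices, and the equational description can be taken from \cite[Lecture~6]{Har92}.
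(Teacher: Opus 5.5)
Your proof is correct, but it takes a different route from the paper: the paper gives no proof of this theorem and simply cites Harris, Lecture~6.

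\textbf{Comparison with the cited treatment.} The classical argument there regards $\Gr(q,V)$ as the set of decomposable points of $\PP(\wedge^qV)$. It proves closedness by a global, coordinate-free criterion: $[\omega]$ is decomposable iff $v\mapsto v\wedge\omega$ has a $q$-dimensional kernel. This gives determinantal equations of degree $\dim V-q+1$. The quadrics are then obtained by duality, and the pieces $\{p_I\neq0\}$ are identified with affine space afterwards.

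You work chart by chart instead. On $\{p_I\neq0\}$ you show the image is the graph of the polynomial map $A\mapsto(\text{minors of }(\mathrm{Id}_q\mid A))$. This works because the entries of $A$ are, up to sign, the dehomogenized Pl\"ucker coordinates with one index exchanged. That is the same computation the paper itself uses in Lemma~\ref{lem:subgrassmannian-closed}.

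Your route buys several things:
\begin{itemize}
\item It keeps Section~3 self-contained in the paper's own graph-coordinate language. It suits the paper's setup, where Proposition~\ref{prop:graph-chart} effectively defines the variety structure through the charts $\cU_I$.
\item It gives the closed immersion scheme-theoretically: on each standard chart of the target, the coordinate-ring map is surjective.
\item It gives injectivity for free.
\item It makes Proposition~\ref{prop:dPlucker-injective} immediate with no appeal to unramified morphisms. The regular left inverse $\rho$ with $\rho\circ\Pl_V|_{\cU_I}=\mathrm{id}$ forces $\dd\Pl_V$ to be injective.
\end{itemize}

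\textbf{Two small corrections.}

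First, closedness does not globalize because the sets $\{p_I\neq0\}$ cover the image. A set that is closed in each member of an open cover of itself is only locally closed. What you need, and have, is that these sets cover all of $\PP(\wedge^qV)$. Together with your identity $\Pl_V^{-1}(\{p_I\neq0\})=\cU_I$, this gives $\Pl_V(\Gr(q,V))\cap\{p_I\neq0\}=Z_I$ for every $I$.

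Second, you can avoid the sign bookkeeping you flag in the final induction. Let $\tilde p$ be the Pl\"ucker vector of $\operatorname{rowspan}(\mathrm{Id}_q\mid A)$. It agrees with $p$ whenever $|J\setminus I|\le1$, and it satisfies the same relations. Take $j\in J\setminus I$, and use the relation with $(i_1,\dots,i_{q-1})=J\setminus\{j\}$ and $\{j_0,\dots,j_q\}=I\cup\{j\}$. It contains $p_J$ with coefficient $\pm p_I$, and otherwise only products of coordinates with smaller $|\cdot\setminus I|$. So it determines $p_J$ and $\tilde p_J$ by the same expression, whatever the signs are.
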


\section{Regularity of the Grassmann--Pl\texorpdfstring{\"u}{u}cker Parametrization}

\subsection{Meaning of regularity}

\begin{definition}[Regularity used in this paper]
\label{def:regular-parametrization}
Let $f:X\to Z$ be a morphism from a smooth algebraic variety $X$ to an
algebraic variety $Z$. If the differential
\[
  \dd f|_x:T_xX\longrightarrow T_{f(x)}Z
\]
is injective for every $x\in X$, then $f$ is called a \emph{regular
parametrization}. Equivalently,
\[
  \rank(\dd f|_x)=\dim X
\]
for every $x\in X$.
\end{definition}

\begin{remark}
In Definition~\ref{def:regular-parametrization}, ``regular'' means that the
differential has maximal rank everywhere. This differs from the convention in
algebraic geometry in which any morphism of algebraic varieties may be called a
regular map. More precisely, we will prove that $\Phi$ is an immersion
everywhere. We follow the use of ``regular parametrization'' in the literature on
polynomial convolutional networks~\cite[Theorem~4.5]{SMK25}.
\end{remark}

\subsection{The convolution-induced Grassmann map is a morphism}

\begin{proposition}\label{prop:Gamma-morphism}
The map $\Gamma_{\cC}:\Gr(q,\cK)\to\Gr(q,H)$ is a morphism of algebraic
varieties.
\end{proposition}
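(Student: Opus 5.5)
The plan is to check regularity locally, using the standard affine charts of Proposition~\ref{prop:graph-chart} on both source and target. Fix bases of $\cK$ and $H$. Since the charts $\cU_I$ cover $\Gr(q,\cK)$, it suffices to show that on each $\cU_I\cong\Hom(\cK_I,\cK_{I^c})\cong\CC^{q(k-q)}$ the map $\Gamma_{\cC}$ is given, near each point, by polynomial or rational expressions that land in a standard chart $\cU_J$ of $\Gr(q,H)$. Regular maps glue along an open cover, so this local description is enough to conclude that $\Gamma_{\cC}$ is a morphism.

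Concretely, for $A\in\Hom(\cK_I,\cK_{I^c})$, the subspace $\operatorname{Graph}(A)$ has basis $u_r=e_{i_r}+A(e_{i_r})$, $r=1,\ldots,q$. Its image $\Gamma_{\cC}(\operatorname{Graph}(A))$ therefore has basis $\cC(u_r)$, and the $q\times\dim H$ matrix $M(A)$ with rows $\cC(u_r)$ has entries that are affine-linear in the entries of $A$. By Lemma~\ref{lem:Gamma-well-defined}, $M(A)$ has full rank $q$ for every $A$. So for each $A_0$ some $q\times q$ minor $\det M(A)_J$ is nonzero at $A_0$, and hence on the open neighborhood $\{\det M(A)_J\neq0\}$. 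On that neighborhood $\Gamma_{\cC}(\operatorname{Graph}(A))\in\cU_J$. Its graph coordinate is read off from $M(A)_J^{-1}M(A)$, specifically from the columns outside $J$. By Cramer's rule these entries are polynomials in the entries of $A$ divided by $\det M(A)_J$, so they are regular functions on the open set. The sets $\{\det M(A)_J\neq0\}$, as $J$ varies, cover $\cU_I$, which gives regularity on each $\cU_I$.

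An alternative route, useful as a cross-check, goes through \eqref{eq:commuting-identity}. The map $\Pl_H\circ\Gamma_{\cC}=\PP(\wedge^q\cC)\circ\Pl_{\cK}$ is a composite of morphisms, because $\PP(\wedge^q\cC)$ is the projectivization of an injective linear map. Since $\Pl_H$ is a closed embedding (Theorem~\ref{thm:Plucker-closed}), a set map into $\Gr(q,H)$ whose composite with $\Pl_H$ is a morphism is itself a morphism, since $\Gr(q,H)$ is isomorphic to its closed image.

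The only delicate point is the bookkeeping that the open sets $\{\det M(A)_J\neq0\}$ really cover the chart and that the Cramer-rule formulas agree on overlaps. Agreement on overlaps is automatic, because they all describe the same subspace. The key input is full rank of $M(A)$, which is exactly the injectivity of $\cC$ from Proposition~\ref{prop:C-injective}.
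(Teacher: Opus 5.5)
Your argument is correct. It checks regularity in graph coordinates chart by chart, as the paper does, but with a different choice of charts. The paper does not work in fixed-basis charts $\cU_I$ on both sides. Instead it fixes $U_0$, chooses $\cK=U_0\oplus L$, and then chooses an \emph{adapted} target decomposition $H=\cC(U_0)\oplus Q$ with $Q=\cC(L)\oplus R$. With that choice, the whole chart $\Hom(U_0,L)$ around $U_0$ lands in the single target chart $\Hom(\cC(U_0),Q)$, and the local expression is the linear map $A\mapsto\cC|_L\circ A\circ(\cC|_{U_0})^{-1}$. There are no minors, no Cramer's rule, and no refinement of the cover. The same adapted-chart formula reappears in Proposition~\ref{prop:dGamma} and Lemma~\ref{lem:alpha-isomorphism}. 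The price is that these are graph charts for arbitrary complements, while Proposition~\ref{prop:graph-chart} is stated only for coordinate complements of a fixed basis. So the paper implicitly relies on the (standard) fact that the variety structure does not depend on the chosen basis. Your main route uses only the charts exactly as stated in Proposition~\ref{prop:graph-chart}. It pays with rational expressions and with the subcover of each $\cU_I$ by the principal opens $\{\det M(A)_J\neq0\}$; that these cover $\cU_I$ is precisely the full-rank consequence of injectivity of $\cC$. Your cross-check via \eqref{eq:commuting-identity} is a third, shorter argument. It rests on the cited Theorem~\ref{thm:Plucker-closed} and on the fact that a morphism from a reduced variety with image in a closed subvariety factors through that subvariety, whereas both chart computations are self-contained.
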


\begin{proof}
Fix $U_0\in\Gr(q,\cK)$ and choose a complement $L$ such that
\[
  \cK=U_0\oplus L.
\]
Since $\cC$ is injective,
\[
  \cC(\cK)=\cC(U_0)\oplus\cC(L).
\]
Choose a subspace $R\subseteq H$ such that
\[
  H=\cC(U_0)\oplus\cC(L)\oplus R,
\]
and set $Q=\cC(L)\oplus R$. Graph coordinates near $U_0$ in the source
Grassmannian are
\[
  A\in\Hom(U_0,L)
  \longmapsto\operatorname{Graph}(A).
\]
For every such $A$,
\[
\begin{aligned}
  \cC(\operatorname{Graph}(A))
  &=\{\cC(u)+\cC(Au):u\in U_0\}\\
  &=\operatorname{Graph}(B_A),
\end{aligned}
\]
where
\[
  B_A=\cC|_L\circ A\circ(\cC|_{U_0})^{-1}
  \in\Hom(\cC(U_0),Q).
\]
The map $A\mapsto B_A$ is linear and hence regular in graph coordinates. Such
graph-coordinate neighborhoods cover $\Gr(q,\cK)$, so $\Gamma_{\cC}$ is a
morphism.
\end{proof}

By Theorem~\ref{thm:Plucker-closed}, $\Pl_H$ is a morphism. Therefore
$\Phi=\Pl_H\circ\Gamma_{\cC}$ is also a morphism.

\subsection{Differential of the convolution-induced Grassmann map}

Fix $U\in\Gr(q,\cK)$. Define a linear map on quotient spaces by
\begin{equation}\label{eq:Cbar}
  \overline{\cC}_U:\cK/U\longrightarrow H/\cC(U),
  \qquad [v]\longmapsto[\cC(v)].
\end{equation}

\begin{lemma}\label{lem:Cbar-well-defined-injective}
The map $\overline{\cC}_U$ is well defined and injective.
\end{lemma}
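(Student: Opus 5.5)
The plan is to verify the two claims directly from the definition \eqref{eq:Cbar}, using only the linearity and injectivity of $\cC$ from Proposition~\ref{prop:C-injective}.

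\emph{Well-definedness.} Suppose $[v]=[v']$ in $\cK/U$, so that $v-v'\in U$. By linearity,
\[
  \cC(v)-\cC(v')=\cC(v-v')\in\cC(U),
\]
and hence $[\cC(v)]=[\cC(v')]$ in $H/\cC(U)$. Thus $\overline{\cC}_U$ does not depend on the representative. It is linear because it is induced by the linear map $\pi\circ\cC:\cK\to H/\cC(U)$, where $\pi:H\to H/\cC(U)$ is the quotient map. This composite vanishes on $U$, so it factors through $\cK/U$.

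\emph{Injectivity.} This is the only step that uses injectivity of $\cC$, and it is the step to handle with care. Suppose $\overline{\cC}_U[v]=0$, so that $\cC(v)\in\cC(U)$. Then there is some $u\in U$ with $\cC(v)=\cC(u)$. Hence $\cC(v-u)=0$, and since $\ker\cC=\{0\}$, we get $v=u\in U$. Therefore $[v]=0$ in $\cK/U$, and $\ker\overline{\cC}_U=0$.

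There is no real obstacle here. The only subtle point is that the membership $\cC(v)\in\cC(U)$ must be lifted back to $v\in U$, and this requires injectivity of $\cC$ on all of $\cK$, not merely on $U$. Lemma~\ref{lem:Gamma-well-defined} alone would not be enough for this, which is why we invoke Proposition~\ref{prop:C-injective} directly.
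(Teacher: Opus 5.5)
Your proof is correct and follows the paper's argument step for step: well-definedness from $\cC(v)-\cC(v')=\cC(v-v')\in\cC(U)$, and injectivity by writing $\cC(v)=\cC(u)$ for some $u\in U$ and using $\ker\cC=\{0\}$ to get $v=u\in U$. Your two additions are accurate and go slightly beyond the paper's proof: you check linearity by factoring the map $\pi\circ\cC$ through $\cK/U$, and you note that injectivity of $\cC$ on all of $\cK$ is needed rather than just on $U$, which is consistent with Remark~\ref{rem:general-kernel}.
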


\begin{proof}
If $[v]=[v']$, then $v-v'\in U$, and therefore
\[
  \cC(v)-\cC(v')=\cC(v-v')\in\cC(U).
\]
Thus $[\cC(v)]=[\cC(v')]$, which proves well-definedness.

If $\overline{\cC}_U([v])=0$, then $\cC(v)\in\cC(U)$. Hence there exists
$u\in U$ such that
\[
  \cC(v)=\cC(u).
\]
It follows that $\cC(v-u)=0$. Since $\cC$ is injective, $v-u=0$, so $v\in U$
and hence $[v]=0$. Thus $\overline{\cC}_U$ is injective.
\end{proof}

\begin{proposition}[Differential formula]\label{prop:dGamma}
Under the natural isomorphisms
\[
  T_U\Gr(q,\cK)\cong\Hom(U,\cK/U)
\]
and
\[
  T_{\cC(U)}\Gr(q,H)
  \cong\Hom(\cC(U),H/\cC(U)),
\]
one has, for every $A\in\Hom(U,\cK/U)$,
\begin{equation}\label{eq:dGamma}
  \dd\Gamma_{\cC}|_U(A)
  =\overline{\cC}_U\circ A\circ(\cC|_U)^{-1}.
\end{equation}
\end{proposition}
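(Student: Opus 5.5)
The plan is to compute the differential in the graph coordinates already constructed in the proof of Proposition~\ref{prop:Gamma-morphism}. Then we transport the result through the identifications of Theorem~\ref{thm:tangent-Gr}.

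First, fix a complement $L$ with $\cK=U\oplus L$. As in Proposition~\ref{prop:Gamma-morphism}, choose $R$ with $H=\cC(U)\oplus\cC(L)\oplus R$ and put $Q=\cC(L)\oplus R$. In these charts $\Gamma_{\cC}$ is the linear map
\[
  A\longmapsto B_A=\cC|_L\circ A\circ(\cC|_U)^{-1},
  \qquad \Hom(U,L)\to\Hom(\cC(U),Q),
\]
with $U$ sent to $0$. The differential of a linear map is the map itself, so in chart coordinates $\dd\Gamma_{\cC}|_U(A)=B_A$ for $A\in\Hom(U,L)$.

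Next, we pass to quotients. By Theorem~\ref{thm:tangent-Gr}, the source identification sends $A\in\Hom(U,L)$ to $\pi_{\cK}|_L\circ A\in\Hom(U,\cK/U)$. The target identification sends $B\in\Hom(\cC(U),Q)$ to $\pi_H|_Q\circ B$, where $\pi_{\cK}:\cK\to\cK/U$ and $\pi_H:H\to H/\cC(U)$ are the quotient maps. Let $\bar A=\pi_{\cK}|_L\circ A$. We must check that
\[
  \pi_H\circ\cC|_L\circ A\circ(\cC|_U)^{-1}
  =\overline{\cC}_U\circ\bar A\circ(\cC|_U)^{-1}.
\]
This follows from the identity $\pi_H\circ\cC=\overline{\cC}_U\circ\pi_{\cK}$ on $\cK$, which is exactly the definition \eqref{eq:Cbar} together with Lemma~\ref{lem:Cbar-well-defined-injective}. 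Restricting that identity to $L$ gives the formula \eqref{eq:dGamma}.

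The only genuine point is that the chart computation yields a statement about the \emph{natural} isomorphisms, independent of the choices of $L$ and $R$. Theorem~\ref{thm:tangent-Gr} already shows that the identification is independent of the complement. So it suffices that the chosen charts are compatible with those identifications. This holds because the complement used for $\cC(U)$ in $H$ is $Q$, and $\pi_H|_Q$ is an isomorphism onto $H/\cC(U)$.

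As a cross-check we can use first-order curves, which avoids charts entirely. Take the curve $U_t=\{u+t\widetilde A(u):u\in U\}$ with $\widetilde A:U\to\cK$ a lift of $A$. Its image is $\cC(U_t)=\{w+t\,\cC\widetilde A(\cC|_U)^{-1}w : w\in\cC(U)\}$. The tangent vector of this image curve is $w\mapsto[\cC\widetilde A(\cC|_U)^{-1}w]$, and this equals $\overline{\cC}_U\circ A\circ(\cC|_U)^{-1}$.
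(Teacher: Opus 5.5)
Your proof is correct, but your main argument takes a different route from the paper's. The paper never writes down the chart expression of $\Gamma_{\cC}$. It represents $A$ by the first-order family $U_t=\{u+t\widetilde A(u):u\in U\}$ for an arbitrary lift $\widetilde A:U\to\cK$, notes that $\cC(U_t)=\{\cC(u)+t\cC(\widetilde A(u)):u\in U\}$, and reads off the target tangent vector $\cC(u)\mapsto[\cC(\widetilde A(u))]=\overline{\cC}_U(A(u))$. It then checks separately that the answer does not depend on the lift; this is essentially your closing cross-check.

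Your primary argument works in charts instead. You reuse the linear chart formula $A\mapsto B_A$ from Proposition~\ref{prop:Gamma-morphism} and use that a linear map is its own differential. You then transport the result through $\pi_{\cK}|_L$ and $\pi_H|_Q$ via the intertwining identity $\pi_H\circ\cC=\overline{\cC}_U\circ\pi_{\cK}$. That identity needs only the well-definedness half of Lemma~\ref{lem:Cbar-well-defined-injective}, and the inclusion $\cC(L)\subseteq Q$ is what makes the source and target charts compatible.

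Your route works directly with the chart-based definition of the identification in Theorem~\ref{thm:tangent-Gr}. So it needs neither first-order families nor a lift-independence check, and naturality is inherited from the complement-independence already established there. The paper's route is shorter and chart-free. It handles an arbitrary lift $\widetilde A$, not just one landing in a fixed complement $L$, and makes the intrinsic nature of the formula more visible. The cost is that it relies on the first-order-family description of tangent vectors.
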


\begin{proof}
Choose a linear lift $\widetilde A:U\to\cK$ such that the quotient map
$\pi_U:\cK\to\cK/U$ satisfies
\[
  \pi_U\circ\widetilde A=A.
\]
By the graph-coordinate description in Theorem~\ref{thm:tangent-Gr}, $A$ is
represented by the first-order family of subspaces
\[
  U_t=\{u+t\widetilde A(u):u\in U\}.
\]
Applying $\Gamma_{\cC}$ gives
\[
\begin{aligned}
  \Gamma_{\cC}(U_t)
  &=\cC(U_t)\\
  &=\{\cC(u)+t\cC(\widetilde A(u)):u\in U\}.
\end{aligned}
\]
Thus the tangent vector in the target Grassmannian maps
$\cC(u)\in\cC(U)$ to
\[
  [\cC(\widetilde A(u))]\in H/\cC(U).
\]
On the other hand,
\[
  A(u)=[\widetilde A(u)]\in\cK/U,
\]
so
\[
  [\cC(\widetilde A(u))]
  =\overline{\cC}_U(A(u)).
\]
Since $\cC|_U:U\to\cC(U)$ is a linear isomorphism,
\[
  \bigl(\dd\Gamma_{\cC}|_U(A)\bigr)(\cC(u))
  =\bigl(\overline{\cC}_U\circ A\circ(\cC|_U)^{-1}\bigr)(\cC(u)),
\]
which proves \eqref{eq:dGamma}.

If $\widetilde A'$ is another lift, then
$(\widetilde A-\widetilde A')(U)\subseteq U$, so
\[
  \cC((\widetilde A-\widetilde A')(U))\subseteq\cC(U).
\]
Therefore the two lifts give the same class in $H/\cC(U)$, and the formula for
the differential is independent of the choice of lift.
\end{proof}

\begin{corollary}\label{cor:dGamma-injective}
For every $U\in\Gr(q,\cK)$, the differential
$\dd\Gamma_{\cC}|_U$ is injective.
\end{corollary}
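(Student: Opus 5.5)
The plan is to read injectivity directly off the differential formula in Proposition~\ref{prop:dGamma}. That formula expresses $\dd\Gamma_{\cC}|_U$ as the composition of the linear map
\[
  \Hom(U,\cK/U)\longrightarrow\Hom(\cC(U),H/\cC(U)),\qquad
  A\longmapsto\overline{\cC}_U\circ A\circ(\cC|_U)^{-1}.
\]
So it suffices to show that this map has zero kernel.

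Suppose $A\in\Hom(U,\cK/U)$ satisfies
\[
  \overline{\cC}_U\circ A\circ(\cC|_U)^{-1}=0.
\]
Since $\cC|_U:U\to\cC(U)$ is a linear isomorphism (Lemma~\ref{lem:Gamma-well-defined} together with Proposition~\ref{prop:C-injective}), we may precompose with $\cC|_U$. This gives $\overline{\cC}_U\circ A=0$ as a map $U\to H/\cC(U)$.

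Hence for every $u\in U$ we have $\overline{\cC}_U(A(u))=0$. By Lemma~\ref{lem:Cbar-well-defined-injective}, $\overline{\cC}_U$ is injective, so $A(u)=0$ in $\cK/U$. As $u$ was arbitrary, $A=0$.

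Because the identifications $T_U\Gr(q,\cK)\cong\Hom(U,\cK/U)$ and $T_{\cC(U)}\Gr(q,H)\cong\Hom(\cC(U),H/\cC(U))$ from Theorem~\ref{thm:tangent-Gr} are linear isomorphisms, injectivity of the displayed map is equivalent to injectivity of $\dd\Gamma_{\cC}|_U$. As a remark, this also gives $\rank \dd\Gamma_{\cC}|_U=q(k-q)$, matching Corollary~\ref{cor:Gr-smooth-dim}.

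There is no serious obstacle here. The only point needing care is that precomposing with the isomorphism $(\cC|_U)^{-1}$ and postcomposing with the injective map $\overline{\cC}_U$ both preserve the property of being nonzero. Both facts have already been established in the cited lemmas.
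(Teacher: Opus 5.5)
Your proof is correct and follows essentially the same route as the paper: you start from the differential formula \eqref{eq:dGamma}, cancel the isomorphism $(\cC|_U)^{-1}$ to get $\overline{\cC}_U\circ A=0$, and then use the injectivity of $\overline{\cC}_U$ from Lemma~\ref{lem:Cbar-well-defined-injective} to conclude $A=0$. Your closing remark that $\rank\dd\Gamma_{\cC}|_U=q(k-q)$ is a harmless, correct addition that the paper states only for $\Phi$.
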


\begin{proof}
Let $A\in\Hom(U,\cK/U)$ satisfy
\[
  \dd\Gamma_{\cC}|_U(A)=0.
\]
By \eqref{eq:dGamma}, for every $u\in U$,
\[
  \overline{\cC}_U(A(u))=0.
\]
Lemma~\ref{lem:Cbar-well-defined-injective} shows that
$\overline{\cC}_U$ is injective, so $A(u)=0$. This holds for every $u\in U$,
and therefore $A=0$. Hence
\[
  \ker(\dd\Gamma_{\cC}|_U)=\{0\}.
\]
\end{proof}

\begin{remark}[Differential kernel for a noninjective linear map]
\label{rem:general-kernel}
Let $T:V\to H$ be any linear map, put $N=\ker T$, and suppose that
$U\in\Gr(q,V)$ satisfies $U\cap N=\{0\}$. Then $\dim T(U)=q$, and one can
again define $\Gamma_T(U)=T(U)$. The quotient map
\[
  \overline T_U:V/U\longrightarrow H/T(U),
  \qquad[v]\longmapsto[T(v)]
\]
satisfies
\[
\begin{aligned}
  [v]\in\ker\overline T_U
  &\Longleftrightarrow T(v)\in T(U)\\
  &\Longleftrightarrow \text{there exists }u\in U\text{ such that }T(v-u)=0\\
  &\Longleftrightarrow v\in U+N.
\end{aligned}
\]
Consequently,
\[
  \ker\overline T_U=(U+N)/U
\]
and
\[
  \ker(\dd\Gamma_T|_U)
  =\Hom\left(U,\frac{U+N}{U}\right).
\]
For the convolution considered here, $N=\{0\}$, so this formula reduces to
$\ker(\dd\Gamma_{\cC}|_U)=\{0\}$.
\end{remark}

\subsection{Differential of the Pl\texorpdfstring{\"u}{u}cker map}

\begin{proposition}\label{prop:dPlucker-injective}
For every $S\in\Gr(q,H)$, the differential
\[
  \dd\Pl_H|_S:T_S\Gr(q,H)\longrightarrow
  T_{\Pl_H(S)}\PP(\wedge^qH)
\]
is injective.
\end{proposition}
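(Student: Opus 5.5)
We prove injectivity of $\dd\Pl_H|_S$ by a direct computation in graph coordinates. Fix $S\in\Gr(q,H)$, choose a basis $s_1,\ldots,s_q$ of $S$, and choose a complement $L$ with $H=S\oplus L$. By Theorem~\ref{thm:tangent-Gr}, a tangent vector at $S$ is represented by some $B\in\Hom(S,L)$, through the first-order family
\[
  S_t=\{s+tB(s):s\in S\}.
\]
Its image under $\Pl_H$ is
\[
  \bigl[(s_1+tBs_1)\wedge\cdots\wedge(s_q+tBs_q)\bigr]
  =\Bigl[\omega+t\sum_{i=1}^q s_1\wedge\cdots\wedge Bs_i\wedge\cdots\wedge s_q+O(t^2)\Bigr],
\]
where $\omega=s_1\wedge\cdots\wedge s_q$.

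We use the standard identification $T_{[\omega]}\PP(\wedge^qH)\cong\wedge^qH/\CC\omega$, or equivalently $\Hom(\CC\omega,\wedge^qH/\CC\omega)$. Under it, the differential sends $B$ to the class of
\[
  \eta_B=\sum_{i=1}^q s_1\wedge\cdots\wedge s_{i-1}\wedge Bs_i\wedge s_{i+1}\wedge\cdots\wedge s_q
\]
modulo $\CC\omega$. Since we only work in an affine chart of $\PP(\wedge^qH)$ around $[\omega]$, this is just the derivative at $t=0$ of the affine representative.

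The key step is to show that if $\eta_B\in\CC\omega$ then $B=0$. Extend $s_1,\ldots,s_q$ by a basis $\ell_1,\ldots,\ell_m$ of $L$ to a basis of $H$. The wedge monomials $s_{J}\wedge\ell_{K}$ with $|J|+|K|=q$ then form a basis of $\wedge^qH$. Write
\[
  Bs_i=\sum_{j}b_{ji}\ell_j .
\]
Then
\[
  \eta_B=\sum_{i,j}b_{ji}\,s_1\wedge\cdots\wedge\ell_j\wedge\cdots\wedge s_q ,
\]
with $\ell_j$ in the $i$th slot. Each of these is, up to sign, a distinct basis monomial with exactly one $L$-factor, and none of them equals $\omega$, which has no $L$-factor. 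Hence $\eta_B\in\CC\omega$ forces every $b_{ji}=0$, that is, $B=0$.

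The main obstacle is bookkeeping rather than substance: we must justify that the tangent space of projective space at $[\omega]$ is $\wedge^qH/\CC\omega$, and that the differential computed via the curve $S_t$ agrees with the intrinsic one. We handle both by passing to the affine chart of $\PP(\wedge^qH)$ where the $\omega$-coordinate is nonzero. In that chart the map $B\mapsto\Pl_H(\operatorname{Graph}(B))$ has polynomial coordinates whose linear part is exactly the coefficient vector $(b_{ji})$. This follows because the Plücker coordinate with one $L$-column $j$ replacing the $s_i$-column is $\pm b_{ji}$. The linear part is therefore a coordinate projection containing all $q(n-q)$ chart coordinates, so the differential is injective.

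As an alternative, one could cite Theorem~\ref{thm:Plucker-closed}: a closed embedding is an immersion, so its differential is injective. We present the direct computation since it is elementary and self-contained.
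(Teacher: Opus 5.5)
Your proof is correct, but it takes a different route from the paper. The paper argues abstractly: by the cited Theorem~\ref{thm:Plucker-closed}, $\Pl_H$ is a closed immersion, closed immersions are unramified, and unramified morphisms induce injective maps on Zariski tangent spaces (with a citation to the Stacks Project). This is essentially the alternative in your last paragraph.

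You instead work in two charts: the graph chart $\Hom(S,L)$, and the affine chart of $\PP(\wedge^qH)$ where the $\omega$-coordinate equals $1$. Your key observation has two parts. The coordinate of $\Pl_H(\operatorname{Graph}(B))$ on $s_{\{1,\ldots,q\}\setminus\{i\}}\wedge\ell_j$ is $(-1)^{q-i}b_{ji}$. Coordinates with $r\geq2$ factors from $L$ are homogeneous of degree $r$ in $B$. This is the same minor computation the paper uses in the proof of Lemma~\ref{lem:subgrassmannian-closed}.

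Two cosmetic points. The differential is a signed coordinate \emph{inclusion} $B\mapsto(\pm b_{ji},0)$, not a ``coordinate projection''. Also, $n$ is undefined, so ``$q(n-q)$'' should read $q\dim L$.

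The paper's route buys brevity. Yours buys self-containedness and a little more:
\begin{itemize}
\item it relies only on Proposition~\ref{prop:graph-chart} and Theorem~\ref{thm:tangent-Gr}, which the paper proves;
\item it gives the explicit formula $\dd\Pl_H|_S(B)=[\eta_B]$ modulo $\CC\omega$;
\item the one-$L$-factor coordinates reproduce the chart coordinates exactly, not only to first order, so $\Pl_H$ on the chart is the graph of a polynomial map, which is essentially the local step of the Pl\"ucker embedding theorem itself.
\end{itemize}
Consequently, with your argument Theorem~\ref{thm:regularity} no longer depends on the cited Pl\"ucker theorem or on facts about unramified morphisms. That strengthens the non-circularity point the paper makes after Theorem~\ref{thm:Phi-closed}.
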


\begin{proof}
By the classical Pl\"ucker closed-embedding theorem,
Theorem~\ref{thm:Plucker-closed} (see
\cite[Lecture~6, pp.~63--67]{Har92}), $\Pl_H$ is a closed immersion.
Every closed immersion is unramified, and an unramified morphism induces an
injective map on Zariski tangent spaces at every point
\cite[Tags~04XV and 0B2G]{Stacks}. Hence $\dd\Pl_H|_S$ is injective.
\end{proof}

\subsection{The regularity theorem}

\begin{theorem}[Regularity of the single-layer parametrization]
\label{thm:regularity}
The differential of
\[
  \Phi:\Gr(q,\cK)\longrightarrow\PP(\wedge^qH)
\]
is injective at every point. More precisely, for every
$U\in\Gr(q,\cK)$,
\[
  \rank(\dd\Phi|_U)=q(k-q).
\]
\end{theorem}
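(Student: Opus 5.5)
The proof is a chain-rule argument. Since $\Phi=\Pl_H\circ\Gamma_{\cC}$ is a composition of morphisms (Proposition~\ref{prop:Gamma-morphism} and the remark following it), the chain rule for Zariski tangent spaces gives, for every $U\in\Gr(q,\cK)$,
\[
  \dd\Phi|_U=\dd\Pl_H|_{\cC(U)}\circ\dd\Gamma_{\cC}|_U .
\]
The first factor is injective by Proposition~\ref{prop:dPlucker-injective}, applied at $S=\cC(U)\in\Gr(q,H)$; this is a legitimate point by Lemma~\ref{lem:Gamma-well-defined}. The second factor is injective by Corollary~\ref{cor:dGamma-injective}. A composition of injective linear maps is injective, so $\dd\Phi|_U$ is injective at every point.

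For the rank statement, injectivity means
\[
  \rank(\dd\Phi|_U)=\dim T_U\Gr(q,\cK).
\]
By Theorem~\ref{thm:tangent-Gr},
\[
  T_U\Gr(q,\cK)\cong\Hom(U,\cK/U),
  \qquad
  \dim\Hom(U,\cK/U)=q(k-q).
\]
This agrees with Corollary~\ref{cor:Gr-smooth-dim}, which gives $\dim\Gr(q,\cK)=q(k-q)$, so the source is smooth and the tangent space has exactly that dimension at every point. Hence $\rank(\dd\Phi|_U)=q(k-q)$. This is also the condition in Definition~\ref{def:regular-parametrization}, so $\Phi$ is a regular parametrization.

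The only point needing care is the identifications. The isomorphisms $T_U\Gr(q,\cK)\cong\Hom(U,\cK/U)$ and $T_{\cC(U)}\Gr(q,H)\cong\Hom(\cC(U),H/\cC(U))$ used in Proposition~\ref{prop:dGamma} must be the same ones as the abstract Zariski tangent spaces on which $\dd\Pl_H$ acts. Otherwise the composition would not be the differential of $\Phi$. Theorem~\ref{thm:tangent-Gr} shows these identifications are canonical, via graph charts and independent of the chosen complement, so injectivity carries over unchanged.

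As an independent check, the commuting identity~\eqref{eq:commuting-identity} writes $\Phi=\PP(\wedge^q\cC)\circ\Pl_{\cK}$. Here $\PP(\wedge^q\cC)$ is the restriction of an injective linear map to a linear closed embedding of projective spaces, so its differential is injective. The differential of $\Pl_{\cK}$ is injective by the same argument as in Proposition~\ref{prop:dPlucker-injective}. This gives the same conclusion by a second route.
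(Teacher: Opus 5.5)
Your proposal is correct and matches the paper's proof essentially verbatim. Both use the chain rule $\dd\Phi|_U=\dd\Pl_H|_{\cC(U)}\circ\dd\Gamma_{\cC}|_U$, get injectivity of the two factors from Proposition~\ref{prop:dPlucker-injective} and Corollary~\ref{cor:dGamma-injective}, and take the count $q(k-q)$ from Corollary~\ref{cor:Gr-smooth-dim}; your second route through \eqref{eq:commuting-identity}, using that $\PP(\wedge^q\cC)$ is a linear closed embedding, is also valid but not used in the paper.
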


\begin{proof}
By the chain rule,
\[
  \dd\Phi|_U
  =\dd\Pl_H|_{\cC(U)}\circ\dd\Gamma_{\cC}|_U.
\]
Suppose $A\in T_U\Gr(q,\cK)$ satisfies $\dd\Phi|_U(A)=0$. By
Proposition~\ref{prop:dPlucker-injective}, the map
$\dd\Pl_H|_{\cC(U)}$ is injective, and therefore
\[
  \dd\Gamma_{\cC}|_U(A)=0.
\]
Corollary~\ref{cor:dGamma-injective} then shows that
$\dd\Gamma_{\cC}|_U$ is injective, so $A=0$. Hence
\[
  \ker(\dd\Phi|_U)=\{0\}.
\]
By Corollary~\ref{cor:Gr-smooth-dim},
\[
  \dim T_U\Gr(q,\cK)=\dim\Gr(q,\cK)=q(k-q).
\]
It follows that
\[
  \rank(\dd\Phi|_U)=q(k-q).
\]
\end{proof}

\section{The Closed-Embedding Theorem}

\subsection{The Grassmannian isomorphism induced by a linear isomorphism}

Set
\begin{equation}\label{eq:W}
  W=\cC(\cK)\subseteq H.
\end{equation}
By Proposition~\ref{prop:C-injective}, restricting the codomain gives a linear
isomorphism
\[
  \widetilde{\cC}:\cK\xrightarrow{\sim}W.
\]

\begin{lemma}\label{lem:alpha-isomorphism}
The map
\[
  \alpha:\Gr(q,\cK)\longrightarrow\Gr(q,W),
  \qquad U\longmapsto\widetilde{\cC}(U)
\]
is an isomorphism of algebraic varieties, with inverse
\[
  \beta:\Gr(q,W)\longrightarrow\Gr(q,\cK),
  \qquad S\longmapsto\widetilde{\cC}^{-1}(S).
\]
\end{lemma}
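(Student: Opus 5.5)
The plan is to show that $\alpha$ and $\beta$ are well-defined morphisms that are mutually inverse; then $\alpha$ is an isomorphism of algebraic varieties.

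\emph{Well-definedness.} Since $\widetilde{\cC}:\cK\to W$ is a linear isomorphism, rank--nullity shows that $\widetilde{\cC}(U)$ has dimension $q$ for every $U\in\Gr(q,\cK)$. Likewise $\widetilde{\cC}^{-1}(S)$ has dimension $q$ for every $S\in\Gr(q,W)$. So $\alpha$ and $\beta$ take values in the stated Grassmannians.

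\emph{Mutual inverses.} Set-theoretically,
\[
  \beta\circ\alpha(U)=\widetilde{\cC}^{-1}(\widetilde{\cC}(U))=U,
  \qquad
  \alpha\circ\beta(S)=\widetilde{\cC}(\widetilde{\cC}^{-1}(S))=S,
\]
because $\widetilde{\cC}$ is bijective.

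\emph{Regularity.} This is the only real point, and I expect it to be straightforward. I will repeat the graph-coordinate argument of Proposition~\ref{prop:Gamma-morphism} for an arbitrary linear isomorphism $T:V\to V'$, and then apply it to $T=\widetilde{\cC}$ and to $T=\widetilde{\cC}^{-1}$.

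\begin{enumerate}[label=(\roman*)]
\item Fix $U_0\in\Gr(q,V)$ and a complement $L$ with $V=U_0\oplus L$.
\item Applying $T$ gives $V'=T(U_0)\oplus T(L)$. This is a genuine direct-sum decomposition of the target, so no auxiliary subspace $R$ is needed, unlike in Proposition~\ref{prop:Gamma-morphism}.
\item For $A\in\Hom(U_0,L)$ we have
\[
  T(\operatorname{Graph}(A))=\operatorname{Graph}\bigl(T|_L\circ A\circ(T|_{U_0})^{-1}\bigr),
\]
and the map $A\mapsto T|_L\circ A\circ(T|_{U_0})^{-1}$ is linear.
\item In the charts of Proposition~\ref{prop:graph-chart}, taken around $U_0$ and around $T(U_0)$, the map $U\mapsto T(U)$ is therefore given by a linear, hence polynomial, map of matrix entries.
\item These charts cover the source Grassmannian. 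The graph chart attached to the complement $T(L)$ is an affine chart of the target compatible with its variety structure, since any complement arises as a standard chart after a change of basis. So $U\mapsto T(U)$ is a morphism.
\end{enumerate}

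Applying this to $\widetilde{\cC}$ and to $\widetilde{\cC}^{-1}$ shows that both $\alpha$ and $\beta$ are morphisms. Being mutually inverse, they make $\alpha$ an isomorphism of algebraic varieties.

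An alternative check of regularity uses Pl\"ucker coordinates. The identity \eqref{eq:commuting-identity}, with $\cC$ replaced by $\widetilde{\cC}$, expresses $\alpha$ as the restriction of the projective linear isomorphism $\PP(\wedge^q\widetilde{\cC})$ to the Pl\"ucker image. The analogous statement holds for $\beta$ with $\wedge^q\widetilde{\cC}^{-1}$. Together with Theorem~\ref{thm:Plucker-closed}, this gives the same conclusion.
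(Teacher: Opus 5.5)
Your proposal is correct and follows essentially the same route as the paper: you show $\alpha$ and $\beta$ are mutually inverse set maps, then prove regularity in graph coordinates. There $\alpha$ becomes the linear map $A\mapsto\widetilde{\cC}|_L\circ A\circ(\widetilde{\cC}|_{U_0})^{-1}$, and the same computation applied to $\widetilde{\cC}^{-1}$ handles $\beta$; your extra remarks (the dimension check, the justification that graph charts for arbitrary complements are legitimate affine charts, and the Pl\"ucker alternative) are sound but go beyond what the paper's argument uses.
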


\begin{proof}
For every $U\in\Gr(q,\cK)$ and $S\in\Gr(q,W)$,
\[
  \beta(\alpha(U))
  =\widetilde{\cC}^{-1}(\widetilde{\cC}(U))=U
\]
and
\[
  \alpha(\beta(S))
  =\widetilde{\cC}(\widetilde{\cC}^{-1}(S))=S.
\]
Thus $\alpha$ and $\beta$ are inverse set maps.

It remains to verify regularity. Fix a decomposition
\[
  \cK=U_0\oplus L.
\]
Then
\[
  W=\widetilde{\cC}(U_0)\oplus\widetilde{\cC}(L).
\]
In the corresponding graph coordinates, $\alpha$ is
\[
  A\longmapsto
  \widetilde{\cC}|_L\circ A\circ
  (\widetilde{\cC}|_{U_0})^{-1}.
\]
This map is linear and therefore regular. Applying the same calculation to
$\widetilde{\cC}^{-1}$ shows that $\beta$ is also regular. Consequently,
$\alpha$ is an isomorphism of algebraic varieties.
\end{proof}

\subsection{A Pl\texorpdfstring{\"u}{u}cker-coordinate proof for a sub-Grassmannian}

The inclusion of linear subspaces $W\subseteq H$ gives the natural map
\begin{equation}\label{eq:j}
  j:\Gr(q,W)\longrightarrow\Gr(q,H),
  \qquad S\longmapsto S.
\end{equation}

\begin{lemma}[Closed embedding of a sub-Grassmannian]
\label{lem:subgrassmannian-closed}
The map $j$ is a closed embedding.
\end{lemma}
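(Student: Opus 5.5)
I will prove the lemma by choosing a basis of $H$ adapted to $W$ and working in Pl\"ucker coordinates. Set $m=\dim H$. Let $f_1,\ldots,f_m$ be a basis of $H$ such that $f_1,\ldots,f_k$ is a basis of $W$; this is possible because $\dim W=k$ by Proposition~\ref{prop:C-injective}.

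\textbf{Step 1: $j$ is a morphism with closed image.} For $S\in\Gr(q,W)$, a basis matrix $M$ of $S$ in the $f$-coordinates has zero columns $k+1,\ldots,m$. Hence $p_I(S)=0$ whenever $I\not\subseteq\{1,\ldots,k\}$, and the remaining coordinates $p_I$ with $I\subseteq\{1,\ldots,k\}$ are exactly the Pl\"ucker coordinates of $S$ in $\Gr(q,W)$.

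So $\Pl_H\circ j=\iota\circ\Pl_W$, where $\iota:\PP(\wedge^qW)\to\PP(\wedge^qH)$ is the linear inclusion given by the coordinate subspace $\{p_I=0 : I\not\subseteq\{1,\ldots,k\}\}$. The map $\iota$ is a closed embedding. By Theorem~\ref{thm:Plucker-closed}, $\Pl_H$ and $\Pl_W$ are closed embeddings. Therefore $\Pl_H\circ j$ is a closed embedding, and consequently $j$ is a morphism: on the locally closed image of $\Pl_H$, regularity of $\Pl_H^{-1}$ gives regularity of $j$.

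I will also show directly that the image is
\[
j(\Gr(q,W))=\Gr(q,H)\cap V\bigl(p_I : I\not\subseteq\{1,\ldots,k\}\bigr).
\]
The inclusion $\subseteq$ was shown above. For $\supseteq$, suppose $S\in\Gr(q,H)$ has every such $p_I=0$, and pick $I_0$ with $p_{I_0}(S)\neq0$; then $I_0\subseteq\{1,\ldots,k\}$. In the chart $\cU_{I_0}$, write $S$ as the row space of a matrix with identity in the columns $I_0$. Each entry $a_{r,j}$ with $j\notin I_0$ equals, up to sign, $p_{(I_0\setminus\{i_r\})\cup\{j\}}/p_{I_0}$. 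For $j>k$ the index set $(I_0\setminus\{i_r\})\cup\{j\}$ is not contained in $\{1,\ldots,k\}$, so this coordinate vanishes. All columns beyond $k$ are therefore zero, which gives $S\subseteq W$. The image is thus closed in $\Gr(q,H)$.

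\textbf{Step 2: $j$ is an isomorphism onto its image.} On the chart $\cU_{I_0}$ of $\Gr(q,H)$ with $I_0\subseteq\{1,\ldots,k\}$, the graph coordinates $A\in\Hom(H_{I_0},H_{I_0^c})$ restrict on the image to the linear subspace $\Hom(W_{I_0},W_{I_0^c})$, namely the matrices whose rows indexed beyond $k$ vanish. This subspace is exactly the graph chart of $\Gr(q,W)$ from Proposition~\ref{prop:graph-chart}. So on each such chart $j$ is the inclusion of a coordinate linear subspace of affine space, and is in particular a closed immersion there.

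These charts cover the image, since every $S\subseteq W$ has some nonzero $p_I$ with $I\subseteq\{1,\ldots,k\}$. The inverse on the image is the projection to those coordinates, which is regular. Gluing the charts, $j$ is an isomorphism onto a closed subvariety.

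\textbf{Expected obstacle.} The main point needing care is the expression of chart coordinates as ratios of Pl\"ucker coordinates with the correct index sets. This is what shows the vanishing equations cut out exactly $S\subseteq W$, and that they are compatible with the local charts scheme-theoretically and not just set-theoretically. I will handle it with the explicit minor computation: the $q\times q$ minor of $(\mathrm{Id}\mid A)$ obtained by replacing column $i_r$ with column $j$ equals $\pm a_{r,j}$.
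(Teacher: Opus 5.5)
Your proposal is correct and follows essentially the same route as the paper: an adapted basis of $H$, the vanishing of every Pl\"ucker coordinate with an index outside $W$, the minor computation in a chart $\cU_{I_0}$ with $I_0\subseteq\{1,\ldots,k\}$ forcing the extra columns to vanish, and the chart description of $j$ as the coordinate-subspace inclusion $A\mapsto(A,0)$. The one addition is the factorization $\Pl_H\circ j=\iota\circ\Pl_W$, which the paper does not use; it in fact proves the lemma on its own, because $\Pl_H$ is an isomorphism onto its closed (not just locally closed) image, so $j=\Pl_H^{-1}\circ\iota\circ\Pl_W$ is an isomorphism onto a closed subset, and your remaining steps then serve only to give the explicit equations and charts.
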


\begin{proof}
Write
\[
  n=\dim W,\qquad m=\dim H.
\]
Choose a basis $e_1,\ldots,e_n$ of $W$ and extend it to a basis
\[
  e_1,\ldots,e_n,e_{n+1},\ldots,e_m
\]
of $H$.

We first prove that $j$ is injective. If $S_1,S_2\in\Gr(q,W)$ and
$j(S_1)=j(S_2)$, then $S_1$ and $S_2$ are equal as linear subspaces of $H$,
and hence $S_1=S_2$. Thus $j$ is injective.

We next describe its image. Define
\begin{equation}\label{eq:Z-subgrassmannian}
  Z=\left\{
  S\in\Gr(q,H):
  p_J(S)=0\text{ for every }J\nsubseteq\{1,\ldots,n\}
  \right\}.
\end{equation}
Here $J\nsubseteq\{1,\ldots,n\}$ means that $J$ contains at least one index
greater than $n$. We prove that
\begin{equation}\label{eq:image-j-Z}
  j(\Gr(q,W))=Z.
\end{equation}

If $S\in\Gr(q,W)$, every basis vector of $S$ belongs to
$\Span(e_1,\ldots,e_n)$, so
\[
  \wedge^qS\subseteq\wedge^qW.
\]
Therefore all Pl\"ucker coordinates containing an index greater than $n$ vanish,
and hence $j(S)\in Z$. This proves
\[
  j(\Gr(q,W))\subseteq Z.
\]

Conversely, take $S\in Z$. The Pl\"ucker coordinates of $S$ are not all zero,
and every coordinate containing an index greater than $n$ is zero. Hence there
exists
\[
  I=\{i_1<\cdots<i_q\}\subseteq\{1,\ldots,n\}
\]
with $p_I(S)\neq0$. Thus $S$ lies in the standard open set $\cU_I$. By
reordering only $e_1,\ldots,e_n$, we may assume that
$I=\{1,\ldots,q\}$. After normalizing $p_I$ to $1$, the subspace $S$ has a
unique row-space matrix
\begin{equation}\label{eq:subgrassmannian-chart-matrix}
  M_S=
  \begin{pmatrix}
    I_q&A&B
  \end{pmatrix},
\end{equation}
where the columns of $A$ correspond to $e_{q+1},\ldots,e_n$, and the columns of
$B$ correspond to $e_{n+1},\ldots,e_m$.

Let $b_{r\ell}$ be the entry of $B$ in row $r$ and in the column corresponding
to $e_\ell$, where $1\leq r\leq q$ and $n+1\leq\ell\leq m$. Computing the
corresponding minor of \eqref{eq:subgrassmannian-chart-matrix} gives
\[
  p_{\{1,\ldots,\widehat r,\ldots,q,\ell\}}(S)
  =(-1)^{q-r}b_{r\ell}p_{\{1,\ldots,q\}}(S)
  =(-1)^{q-r}b_{r\ell}.
\]
This index set contains $\ell>n$. Since $S\in Z$, the left-hand side is zero,
and therefore
\[
  b_{r\ell}=0
\]
for every $r,\ell$. Thus $B=0$, every row of $M_S$ belongs to $W$, and
$S\subseteq W$. Hence $S\in j(\Gr(q,W))$, proving
\eqref{eq:image-j-Z}.

Under the Pl\"ucker embedding, \eqref{eq:Z-subgrassmannian} can be written as
\begin{equation}\label{eq:intersection-subgrassmannian}
  \Pl_H(Z)
  =\Pl_H(\Gr(q,H))\cap\PP(\wedge^qW).
\end{equation}
The space $\PP(\wedge^qW)$ is the projective linear subspace of
$\PP(\wedge^qH)$ defined by the homogeneous linear equations
\[
  p_J=0,
  \qquad J\nsubseteq\{1,\ldots,n\}.
\]
By Theorem~\ref{thm:Plucker-closed}, $\Pl_H(\Gr(q,H))$ is a closed subvariety.
Therefore \eqref{eq:intersection-subgrassmannian} shows that $Z$ is a closed
subvariety of $\Gr(q,H)$.

Finally, we prove that $j$ identifies $\Gr(q,W)$ isomorphically with $Z$. On any
standard open set with $I\subseteq\{1,\ldots,n\}$, points of $\Gr(q,W)$ are
represented by matrices
\[
  \begin{pmatrix}I_q&A\end{pmatrix},
\]
whereas the corresponding points of $Z$ are represented by
\[
  \begin{pmatrix}I_q&A&0\end{pmatrix}.
\]
Thus, in these affine coordinates, $j$ is
\[
  A\longmapsto(A,0),
\]
and its inverse on the image is
\[
  (A,0)\longmapsto A.
\]
Both maps are given by coordinate polynomials and are therefore regular. These
standard open sets cover $\Gr(q,W)$ and $Z$, so
\[
  j:\Gr(q,W)\xrightarrow{\sim}Z
\]
is an isomorphism. Since $Z$ is closed in $\Gr(q,H)$, the map $j$ is a closed
embedding.
\end{proof}

\begin{remark}
The proof of Lemma~\ref{lem:subgrassmannian-closed} uses only the injectivity of
$j$, the vanishing equations for Pl\"ucker coordinates, and the standard affine
coordinates on Grassmannians. On each standard affine open set, the
closed-embedding property is verified directly by the coordinate map
$A\mapsto(A,0)$.
\end{remark}

\subsection{The main closed-embedding theorem}

\begin{theorem}[Closed embedding of the convolution-induced Grassmann map]
\label{thm:Gamma-closed}
The map
\[
  \Gamma_{\cC}:\Gr(q,\cK)\longrightarrow\Gr(q,H)
\]
is a closed embedding.
\end{theorem}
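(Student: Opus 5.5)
The plan is to factor $\Gamma_{\cC}$ through the sub-Grassmannian and use the two results just established. Concretely, I will first check the set-theoretic identity
\[
  \Gamma_{\cC}=j\circ\alpha,
\]
where $\alpha:\Gr(q,\cK)\to\Gr(q,W)$ is the isomorphism of Lemma~\ref{lem:alpha-isomorphism} and $j:\Gr(q,W)\to\Gr(q,H)$ is the inclusion \eqref{eq:j}. This is immediate: for $U\in\Gr(q,\cK)$, the subspace $\widetilde{\cC}(U)\subseteq W$ coincides with $\cC(U)\subseteq H$, since $\widetilde{\cC}$ is just $\cC$ with its codomain restricted to $W$. So $j(\alpha(U))=\cC(U)=\Gamma_{\cC}(U)$. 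Since $\alpha$ and $j$ are morphisms, and $\Gamma_{\cC}$ is a morphism by Proposition~\ref{prop:Gamma-morphism}, this is an equality of morphisms.

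Next I combine the two properties. By Lemma~\ref{lem:alpha-isomorphism}, $\alpha$ is an isomorphism of varieties. By Lemma~\ref{lem:subgrassmannian-closed}, $j$ is an isomorphism onto the closed subvariety $Z\subseteq\Gr(q,H)$ of \eqref{eq:Z-subgrassmannian}. Hence $\Gamma_{\cC}=j\circ\alpha$ maps $\Gr(q,\cK)$ isomorphically onto $Z$, which is closed. Therefore $\Gamma_{\cC}$ is a closed embedding.

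As a byproduct, the image is identified explicitly as $\Gamma_{\cC}(\Gr(q,\cK))=Z=\{S\in\Gr(q,H):S\subseteq W\}$, cut out by the linear Pl\"ucker equations in \eqref{eq:intersection-subgrassmannian}.

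There is no real obstacle here. The only point needing care is that the composite of an isomorphism with a closed embedding is a closed embedding. This is formal: the image is $j(\Gr(q,W))=Z$, and the inverse $Z\to\Gr(q,\cK)$ is $\beta\circ j^{-1}$, a composite of regular maps.
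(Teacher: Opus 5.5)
Your proposal is correct and is essentially the paper's own proof. The paper likewise verifies $\Gamma_{\cC}=j\circ\alpha$ pointwise and then combines Lemma~\ref{lem:alpha-isomorphism} and Lemma~\ref{lem:subgrassmannian-closed} with the fact that an isomorphism followed by a closed embedding is a closed embedding; your explicit description of the image as $Z=\{S\in\Gr(q,H):S\subseteq W\}$ is an accurate byproduct of Lemma~\ref{lem:subgrassmannian-closed}.
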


\begin{proof}
By Lemma~\ref{lem:alpha-isomorphism},
\[
  \alpha:\Gr(q,\cK)\xrightarrow{\sim}\Gr(q,W)
\]
is an isomorphism. By Lemma~\ref{lem:subgrassmannian-closed},
\[
  j:\Gr(q,W)\hookrightarrow\Gr(q,H)
\]
is a closed embedding. For every $U\in\Gr(q,\cK)$,
\[
  (j\circ\alpha)(U)=j(\cC(U))=\cC(U)=\Gamma_{\cC}(U).
\]
Thus
\[
  \Gamma_{\cC}=j\circ\alpha.
\]
The composition of an isomorphism with a closed embedding is a closed embedding,
so $\Gamma_{\cC}$ is a closed embedding.
\end{proof}

\begin{theorem}[Closed embedding of the Grassmann--Pl\texorpdfstring{\"u}{u}cker parametrization]
\label{thm:Phi-closed}
The parametrization
\[
  \Phi:\Gr(q,\cK)\longrightarrow\PP(\wedge^qH)
\]
is a closed embedding.
\end{theorem}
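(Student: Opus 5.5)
The plan is to write $\Phi$ as a composition of two closed embeddings that are already available, and then invoke the fact that closed embeddings are stable under composition. By Definition~\ref{def:Phi},
\[
  \Phi=\Pl_H\circ\Gamma_{\cC}.
\]
Theorem~\ref{thm:Gamma-closed} shows that $\Gamma_{\cC}:\Gr(q,\cK)\to\Gr(q,H)$ is a closed embedding. Theorem~\ref{thm:Plucker-closed}, applied with $V=H$, shows that $\Pl_H:\Gr(q,H)\to\PP(\wedge^qH)$ is a closed embedding.

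The remaining step is the composition itself. Write $Z_1=\Gamma_{\cC}(\Gr(q,\cK))$, which is closed in $\Gr(q,H)$, and $G=\Pl_H(\Gr(q,H))$, which is closed in $\PP(\wedge^qH)$. Since $\Pl_H$ restricts to an isomorphism $\Gr(q,H)\xrightarrow{\sim}G$, the set $\Pl_H(Z_1)$ is closed in $G$. Because $G$ is itself closed, $\Pl_H(Z_1)$ is closed in $\PP(\wedge^qH)$.

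The map $\Phi$ factors as
\[
  \Gr(q,\cK)\xrightarrow{\sim}Z_1\xrightarrow{\sim}\Pl_H(Z_1).
\]
Each arrow is an isomorphism of varieties: the first is $\Gamma_{\cC}$ corestricted to its image, and the second is $\Pl_H$ restricted to the closed subvariety $Z_1$, with the reduced induced structures. Hence $\Phi$ is an isomorphism onto a closed subvariety, i.e.\ a closed embedding. The same argument could equally be phrased scheme-theoretically: closed immersions are stable under composition.

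As a cross-check, one can also use the commutative identity~\eqref{eq:commuting-identity},
\[
  \Phi=\PP(\wedge^q\cC)\circ\Pl_{\cK}.
\]
Here $\Pl_{\cK}$ is a closed embedding by Theorem~\ref{thm:Plucker-closed}. Since $\wedge^q\cC$ is injective, $\PP(\wedge^q\cC)$ is a linear isomorphism onto the projective linear subspace $\PP(\wedge^q\cC(\wedge^q\cK))$, which is closed in $\PP(\wedge^qH)$. This gives a second route to the same conclusion, but the first route is the one intended, since it uses Theorem~\ref{thm:Gamma-closed} directly.

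No step here should be a real obstacle, because all the substantive work was done in Lemma~\ref{lem:subgrassmannian-closed}. The only point that needs care is making sure the image has the right structure: the image of a closed set under $\Pl_H$ must be closed in the whole ambient projective space and not merely in $G$. This is handled by the two-step closedness argument above.
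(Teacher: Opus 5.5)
Your proof is correct and is exactly the paper's argument: $\Phi=\Pl_H\circ\Gamma_{\cC}$, both factors are closed embeddings by Theorems~\ref{thm:Gamma-closed} and~\ref{thm:Plucker-closed}, and a composition of closed embeddings is a closed embedding. The paper takes that last fact as standard, whereas you spell it out. Your cross-check via \eqref{eq:commuting-identity} is also valid, and it would bypass Lemma~\ref{lem:subgrassmannian-closed} entirely, since an injective linear map induces a closed linear embedding of projective spaces.
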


\begin{proof}
By Theorem~\ref{thm:Gamma-closed}, $\Gamma_{\cC}$ is a closed embedding; by
Theorem~\ref{thm:Plucker-closed}, $\Pl_H$ is a closed embedding. Therefore their
composition
\[
  \Phi=\Pl_H\circ\Gamma_{\cC}
\]
is a closed embedding.
\end{proof}

\begin{remark}[Relation to the regularity theorem]
Theorem~\ref{thm:Phi-closed} also implies that the differential of $\Phi$ is
injective everywhere. This implication was not used to prove
Theorem~\ref{thm:regularity}. The proof of regularity uses the direct
differential formula \eqref{eq:dGamma} for the convolution-induced Grassmann map
together with the classical closed-embedding theorem for the Pl\"ucker map. It
does not use the closed-embedding property of the full parametrization $\Phi$,
which is established only in Theorem~\ref{thm:Phi-closed}. Therefore the
regularity argument and the proof of the main closed-embedding theorem are not
circular.
\end{remark}

\section{Finite Birationality and Geometric Consequences}

Continue to write
\[
  X=\Gr(q,\cK),
  \qquad
  Y=\Phi(X)\subseteq\PP(\wedge^qH).
\]

\begin{corollary}[Isomorphism between the parameter space and the neural variety]
\label{cor:X-isomorphic-Y}
The set $Y$ is a closed subvariety of $\PP(\wedge^qH)$, and
\[
  \Phi:X\xrightarrow{\sim}Y
\]
is an isomorphism of algebraic varieties.
\end{corollary}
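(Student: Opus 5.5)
The plan is to deduce this directly from Theorem~\ref{thm:Phi-closed}, using only the definition of a closed embedding. A morphism $f:X\to Z$ is a closed embedding when its image $f(X)$ is a closed subset of $Z$ and $f$ induces an isomorphism of $X$ onto $f(X)$, the latter carrying its reduced induced subvariety structure. Applying this to $\Phi$, with $Z=\PP(\wedge^qH)$, gives the two assertions at once. First, $Y=\Phi(X)$ is closed in $\PP(\wedge^qH)$, so it is a closed subvariety of projective space. Second, $\Phi:X\to Y$ is an isomorphism.

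To keep the argument self-contained rather than definitional, I would make the factorization explicit. By Theorem~\ref{thm:Gamma-closed} and its proof, $\Gamma_{\cC}=j\circ\alpha$. Here $\alpha$ is the isomorphism of Lemma~\ref{lem:alpha-isomorphism}, and $j$ identifies $\Gr(q,W)$ with the closed set $Z\subseteq\Gr(q,H)$ of Lemma~\ref{lem:subgrassmannian-closed}. Hence $\Gamma_{\cC}:X\to Z$ is an isomorphism.

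Next, by Theorem~\ref{thm:Plucker-closed}, $\Pl_H$ maps $\Gr(q,H)$ isomorphically onto a closed subvariety $G\subseteq\PP(\wedge^qH)$. Restricting this isomorphism to the closed subset $Z$ shows that $\Pl_H(Z)$ is closed in $G$, hence closed in $\PP(\wedge^qH)$, and that $\Pl_H|_Z:Z\to\Pl_H(Z)$ is an isomorphism. By \eqref{eq:intersection-subgrassmannian}, $\Pl_H(Z)=G\cap\PP(\wedge^qW)$, and this set equals $Y$. Therefore $\Phi=\Pl_H|_Z\circ\Gamma_{\cC}$ is a composition of isomorphisms $X\to Z\to Y$.

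I expect no real obstacle here. The only point needing care is the scheme structure on $Y$: the set $Y$ should be given its reduced induced structure, and one must check that the isomorphism $\Pl_H|_Z$ respects this. Since $X$ is reduced and irreducible (Corollary~\ref{cor:Gr-smooth-dim}, Lemma~\ref{lem:Gr-irreducible}), its image under a closed embedding is automatically reduced. So $Y$ is an irreducible closed subvariety isomorphic to $X$.
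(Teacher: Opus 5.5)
Your proposal is correct and follows the paper's proof: both apply the definition of a closed embedding to Theorem~\ref{thm:Phi-closed}, which gives at once that $Y=\Phi(X)$ is closed in $\PP(\wedge^qH)$ and that $\Phi:X\to Y$ is an isomorphism. Your extra factorization $X\xrightarrow{\Gamma_{\cC}}Z\xrightarrow{\Pl_H|_Z}Y$ is also valid but only unwinds the proof of Theorem~\ref{thm:Phi-closed}, so it is not needed; note also that you use the letter $Z$ for two different spaces.
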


\begin{proof}
By definition, a closed embedding identifies $X$ isomorphically with a closed
subvariety of the target. That closed subvariety is exactly the image $Y$ of
$\Phi$.
\end{proof}

\begin{corollary}[Finiteness]\label{cor:finite}
The morphism
\[
  \Phi:X\longrightarrow\PP(\wedge^qH)
\]
is finite. In particular, $\Phi:X\to Y$ is finite.
\end{corollary}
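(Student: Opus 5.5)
The plan is to deduce finiteness from the closed-embedding property established in Theorem~\ref{thm:Phi-closed}, using the standard fact that closed immersions are finite morphisms. Concretely, I would verify finiteness directly on affine pieces so the argument stays self-contained. Let $V_0\subseteq\PP(\wedge^qH)$ be any affine open set, for instance a standard chart $\{p_J\neq0\}$. Since $\Phi$ identifies $X$ with the closed subvariety $Y$ (Corollary~\ref{cor:X-isomorphic-Y}), the preimage $\Phi^{-1}(V_0)$ is isomorphic to $Y\cap V_0$. This set is closed in the affine variety $V_0$, hence affine. Its coordinate ring is $\CC[V_0]/I(Y\cap V_0)$, and the pullback $\Phi^*:\CC[V_0]\to\CC[\Phi^{-1}(V_0)]$ is surjective. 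A surjective ring map makes the target a module generated by $1$, hence finitely generated. Since the charts $\{p_J\neq0\}$ cover the projective space, $\Phi$ is finite.

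For the second assertion, I would regard $\Phi:X\to Y$ as a morphism to the closed subvariety $Y$, which is an isomorphism by Corollary~\ref{cor:X-isomorphic-Y}. Over each affine open $Y\cap V_0$, the pullback is then an isomorphism of coordinate rings, so it is trivially finite. Alternatively, one can note that $X\to Y$ is the composition of $X\to\PP(\wedge^qH)$ with the identification of the image, and invoke the fact that finiteness is local on the target.

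The only point requiring care is the local description of a closed embedding. One must know that the preimage of an affine open is affine and that the comorphism is surjective. This is essentially the definition of a closed immersion, and the paper's definition phrases it as an isomorphism onto a closed subvariety. I would justify it by observing that a closed subvariety of an affine variety is affine, with coordinate ring the quotient by its ideal, and then transporting this description along the isomorphism $X\cong Y$. I do not expect any real obstacle beyond stating this cleanly. One could instead cite \cite{Stacks} for ``closed immersions are finite'' together with stability of finiteness under composition with isomorphisms.
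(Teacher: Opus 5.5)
Your proposal is correct and follows essentially the paper's argument: both deduce finiteness from the closed-embedding theorem by noting that over an affine open of the target the preimage is the spectrum of a quotient of the coordinate ring, and such a quotient is generated as a module by $1$. Your observation that $\Phi:X\to Y$ is finite because it is an isomorphism also spells out the ``in particular'' clause, which the paper leaves implicit.
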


\begin{proof}
A closed embedding is a finite morphism~\cite[Tag~035C]{Stacks}. More explicitly,
let $V=\operatorname{Spec}A$ be an affine open subset of the target. Since
$\Phi$ is a closed embedding, there is an ideal $I\subseteq A$ such that
\[
  \Phi^{-1}(V)=\operatorname{Spec}(A/I).
\]
As an $A$-module, $A/I$ is generated by $1+I$, and hence is finitely generated.
Therefore $\Phi$ is finite.
\end{proof}

\begin{corollary}[Birationality]\label{cor:birational}
The morphism $\Phi:X\to Y$ is birational.
\end{corollary}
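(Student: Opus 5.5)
The plan is to deduce birationality directly from Corollary~\ref{cor:X-isomorphic-Y}. We will not need any separate computation of fibers or function fields. A morphism of irreducible varieties $f:X\to Y$ is birational when it is dominant and there exist nonempty open sets $X_0\subseteq X$, $Y_0\subseteq Y$ such that $f|_{X_0}:X_0\to Y_0$ is an isomorphism. Equivalently, $f$ is dominant and $f^*$ induces an isomorphism $\CC(Y)\xrightarrow{\sim}\CC(X)$ of function fields. We will verify this definition with $X_0=X$ and $Y_0=Y$.

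The steps are as follows. First, by Lemma~\ref{lem:Gr-irreducible}, $X=\Gr(q,\cK)$ is irreducible. Since $Y=\Phi(X)$ is the image of an irreducible space under a continuous map, $Y$ is also irreducible. By Corollary~\ref{cor:X-isomorphic-Y}, $Y$ is closed in $\PP(\wedge^qH)$, so $Y$ is a projective variety and its function field $\CC(Y)$ makes sense.

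Second, $\Phi:X\to Y$ is surjective by the definition of $Y$, and in particular dominant. Third, Corollary~\ref{cor:X-isomorphic-Y} shows that $\Phi:X\to Y$ is an isomorphism with some regular inverse $\Psi:Y\to X$. Taking both open sets to be the whole spaces, $\Phi$ restricts to an isomorphism between nonempty open subsets, which is the definition of birationality. In function-field terms, $\Phi^*:\CC(Y)\to\CC(X)$ and $\Psi^*:\CC(X)\to\CC(Y)$ are mutually inverse, because $\Phi\circ\Psi=\mathrm{id}_Y$ and $\Psi\circ\Phi=\mathrm{id}_X$. The rational inverse required in the definition is therefore $\Psi$ itself. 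As a remark, one may also note that the inverse can be written concretely as $\Pl_\cK^{-1}\circ\PP(\wedge^q\cC)^{-1}$ on $Y$ via \eqref{eq:commuting-identity}, but this is not needed.

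The only real obstacle is bookkeeping: we must make sure the definition of birationality being used applies to varieties, which requires irreducibility of $Y$. This is why Step~1 checks irreducibility explicitly rather than taking it for granted. Once irreducibility is in hand, the corollary follows formally from the isomorphism in Corollary~\ref{cor:X-isomorphic-Y}.
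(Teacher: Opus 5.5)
Your proposal is correct and follows essentially the same route as the paper: irreducibility of $X$ from Lemma~\ref{lem:Gr-irreducible}, irreducibility of $Y$, and the isomorphism of Corollary~\ref{cor:X-isomorphic-Y} giving $\Phi^*:\CC(Y)\xrightarrow{\sim}\CC(X)$. The only cosmetic difference is that you obtain irreducibility of $Y$ as the continuous image of $X$, whereas the paper obtains it from $Y\cong X$; you also state dominance and the choice $X_0=X$, $Y_0=Y$ explicitly.
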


\begin{proof}
By Lemma~\ref{lem:Gr-irreducible}, $X$ is irreducible. By
Corollary~\ref{cor:X-isomorphic-Y}, $Y$ is isomorphic to $X$ and is therefore
also irreducible. The same corollary gives an isomorphism of function fields
\[
  \Phi^*:\CC(Y)\xrightarrow{\sim}\CC(X).
\]
Thus $\Phi:X\to Y$ is birational. In fact, it is stronger than a birational map:
it is an isomorphism on all of $X$, not merely on a dense open subset.
\end{proof}

\begin{corollary}[Uniqueness of fibers]\label{cor:fibers}
For every $y\in Y$,
\[
  \#\Phi^{-1}(y)=1.
\]
\end{corollary}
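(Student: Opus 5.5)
The statement to prove is that every fiber of $\Phi$ over a point of $Y$ has exactly one element. I would prove this by splitting it into existence and uniqueness of a preimage.

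\emph{Existence.} Fix $y\in Y$. Since $Y=\Phi(X)$ by definition \eqref{eq:Y}, there is some $U\in X$ with $\Phi(U)=y$. Hence $\Phi^{-1}(y)$ is nonempty and $\#\Phi^{-1}(y)\geq 1$.

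\emph{Uniqueness.} Suppose $U_1,U_2\in X$ satisfy $\Phi(U_1)=\Phi(U_2)$. By Corollary~\ref{cor:X-isomorphic-Y}, $\Phi:X\to Y$ is an isomorphism, so in particular it is a bijection, and therefore $U_1=U_2$. This gives $\#\Phi^{-1}(y)\leq1$, and combined with existence, $\#\Phi^{-1}(y)=1$.

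To make the argument independent of the scheme-theoretic closed-embedding machinery, I would also add a direct check of injectivity. From $\Pl_H(\cC(U_1))=\Pl_H(\cC(U_2))$, injectivity of the Pl\"ucker map (Theorem~\ref{thm:Plucker-closed}) gives $\cC(U_1)=\cC(U_2)$. Applying the inverse $\widetilde{\cC}^{-1}$ of the linear isomorphism $\widetilde{\cC}:\cK\to W$ then gives $U_1=U_2$; equivalently, one can use that $\alpha$ is bijective (Lemma~\ref{lem:alpha-isomorphism}) and that $j$ is injective (Lemma~\ref{lem:subgrassmannian-closed}).

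I expect no real obstacle. The only point needing care is to read ``closed embedding'' as implying injectivity on closed points, which is exactly what the direct injectivity check above supplies.
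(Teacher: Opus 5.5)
Your proposal is correct and follows the paper's proof: $\Phi$ is surjective onto $Y=\Phi(X)$ by definition, and the isomorphism $\Phi:X\xrightarrow{\sim}Y$ of Corollary~\ref{cor:X-isomorphic-Y} is bijective on points, so each fiber over $Y$ has exactly one element. Your supplementary direct check, using injectivity of $\Pl_H$ and then of $\cC$, is also valid; it shows that the fiber statement needs only the injectivity of $\cC$ and of the Pl\"ucker map, not the full closed-embedding theorem.
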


\begin{proof}
By Corollary~\ref{cor:X-isomorphic-Y}, $\Phi:X\to Y$ is an isomorphism, so its
underlying map of sets is bijective. Hence every $y\in Y$ has exactly one
preimage.
\end{proof}

\begin{corollary}[Dimension and smoothness]\label{cor:Y-smooth}
The neural variety $Y$ satisfies
\[
  \dim Y=q(k-q)
\]
and
\[
  \Sing(Y)=\varnothing.
\]
\end{corollary}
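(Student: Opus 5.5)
The plan is to transport both properties from $X=\Gr(q,\cK)$ to $Y$ through the isomorphism of Corollary~\ref{cor:X-isomorphic-Y}. Dimension and smoothness are intrinsic invariants of an algebraic variety, so they are preserved by any isomorphism $\Phi:X\xrightarrow{\sim}Y$. It therefore suffices to know that $X$ is smooth of dimension $q(k-q)$. This is Corollary~\ref{cor:Gr-smooth-dim} applied with $V=\cK$ and $n=\dim\cK=k$.

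For the dimension, I would argue as follows. By Lemma~\ref{lem:Gr-irreducible}, $X$ is irreducible, and by Corollary~\ref{cor:X-isomorphic-Y} the isomorphism $\Phi$ is a homeomorphism in the Zariski topology. Hence $\Phi$ carries chains of irreducible closed subsets of $X$ bijectively onto chains of irreducible closed subsets of $Y$, so $\dim Y=\dim X=q(k-q)$. Alternatively, one can use the function field isomorphism $\Phi^*:\CC(Y)\xrightarrow{\sim}\CC(X)$ from Corollary~\ref{cor:birational} and compare transcendence degrees over $\CC$.

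For smoothness, I would fix $y\in Y$ and let $x$ be its unique preimage, which exists by Corollary~\ref{cor:fibers}. The isomorphism induces an isomorphism of local rings $\mathcal O_{Y,y}\cong\mathcal O_{X,x}$. Consequently the Zariski tangent spaces are isomorphic, $T_yY\cong T_xX$, via $\dd\Phi|_x$ with inverse $\dd(\Phi^{-1})|_y$, and so
\[
  \dim T_yY=\dim T_xX=q(k-q)=\dim Y.
\]
Thus $y$ is a nonsingular point of $Y$. Since $y$ was arbitrary, $\Sing(Y)=\varnothing$. A more concrete route is also available: pulling back the standard charts $\cU_I\cong\CC^{q(k-q)}$ of Proposition~\ref{prop:graph-chart} along $\Phi^{-1}$ covers $Y$ by open sets isomorphic to affine space.

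None of these steps is a real obstacle. The only point needing care is that smoothness is computed for $Y$ with its reduced closed-subvariety structure, and that this is the structure for which $\Phi:X\to Y$ is an isomorphism. Corollary~\ref{cor:X-isomorphic-Y} provides exactly this, since the scheme-theoretic image of the closed embedding is reduced, because $X$ is reduced. Once that is settled, both claims are formal consequences of the invariance of dimension and of the tangent-space criterion under isomorphism.
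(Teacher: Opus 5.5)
Your proposal is correct and takes the same route as the paper. Both transport dimension and smoothness from $X=\Gr(q,\cK)$ to $Y$ through the isomorphism of Corollary~\ref{cor:X-isomorphic-Y}, use the fact that isomorphisms preserve local rings, and invoke Corollary~\ref{cor:Gr-smooth-dim} for $X$; you spell out the invariance steps (chains of irreducible closed sets, Zariski tangent spaces, reducedness of the image) that the paper compresses into one line.
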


\begin{proof}
By Corollary~\ref{cor:X-isomorphic-Y}, $Y\cong X=\Gr(q,\cK)$. Isomorphisms
preserve local rings and therefore preserve dimension and the property of a
local ring being regular. By Corollary~\ref{cor:Gr-smooth-dim},
\[
  \dim X=q(\dim\cK-q)=q(k-q)
\]
and $X$ is smooth. Therefore $Y$ has the same dimension and is smooth; that is,
$\Sing(Y)=\varnothing$.
\end{proof}

\begin{corollary}[The rank-one case]\label{cor:q-one}
If $q=1$, then
\[
  \Gr(1,\cK)=\PP(\cK),
\]
and the parametrization reduces to the projective linear embedding
\[
  \Phi:\PP(\cK)\longrightarrow\PP(H),
  \qquad[w]\longmapsto[\cC(w)].
\]
\end{corollary}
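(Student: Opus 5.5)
The plan is to verify the two assertions of Corollary~\ref{cor:q-one} separately: first identify $\Gr(1,\cK)$ with $\PP(\cK)$, then compute $\Phi$ on a line.

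For the identification, a $1$-dimensional subspace $U\subseteq\cK$ is $\Span(w)$ for some nonzero $w$, and $w$ is determined up to a nonzero scalar. So $U\mapsto[w]$ is a bijection $\Gr(1,\cK)\to\PP(\cK)$. To see that this bijection is an isomorphism of varieties, and not merely of sets, I would use the Pl\"ucker map with $q=1$. Here $\wedge^1\cK=\cK$, and Definition~\ref{def:Plucker} gives $\Pl_{\cK}(\Span(w))=[w]$. The Pl\"ucker relations are quadratic relations among $q\times q$ minors, and for $q=1$ they are empty, since every nonzero vector is decomposable. Hence Theorem~\ref{thm:Plucker-closed} identifies $\Gr(1,\cK)$ with all of $\PP(\cK)$.

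An alternative check uses charts. For $I=\{i\}$, the standard open set $\cU_I$ of Proposition~\ref{prop:graph-chart} consists of lines whose generator has $w_i\neq0$. Its graph coordinates are $w_j/w_i$ for $j\neq i$, which are exactly the standard affine coordinates on the chart $\{w_i\neq0\}\subseteq\PP(\cK)$.

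For the formula, take $U=\Span(w)$ and apply \eqref{eq:Phi-basis} with the one-element basis $u_1=w$. This gives $\Phi(U)=[\cC(w)]\in\PP(\wedge^1H)=\PP(H)$. Equivalently, the commutative identity \eqref{eq:commuting-identity} with $q=1$ reads $\Phi=\PP(\cC)\circ\Pl_{\cK}$, and $\Pl_{\cK}$ is the identification above. The value $[\cC(w)]$ is well defined because $\cC(w)\neq0$ by Proposition~\ref{prop:C-injective}, and because $\cC(\lambda w)=\lambda\cC(w)$ by linearity.

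Finally, $\PP(\cC)$ is the projectivization of an injective linear map, so it is a projective linear embedding onto the linear subspace $\PP(\cC(\cK))\subseteq\PP(H)$. This is also consistent with Theorem~\ref{thm:Phi-closed}.

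No step here is a real obstacle. The only point needing care is being explicit that $\Gr(1,\cK)=\PP(\cK)$ holds as varieties, which I will settle through the $q=1$ Pl\"ucker embedding or the matching charts as described above.
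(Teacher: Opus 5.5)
Your proposal is correct and follows essentially the same route as the paper: identify $\Gr(1,\cK)$ with $\PP(\cK)$, specialize \eqref{eq:Phi-basis} to a one-element basis using $\wedge^1H=H$, and use the injectivity of $\cC$ to conclude that $\PP(\cC)$ is a projective linear closed embedding. You also check that the identification holds as varieties, via the $q=1$ Pl\"ucker map or the matching standard charts; this adds rigor that the paper leaves implicit in the phrase ``by definition''.
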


\begin{proof}
By definition, $\Gr(1,\cK)$ is the set of one-dimensional subspaces of $\cK$,
which is $\PP(\cK)$. Moreover, $\wedge^1H=H$, and
$\Pl_H:\Gr(1,H)\to\PP(H)$ is the identity identification. Thus
\eqref{eq:Phi-basis} becomes
\[
  \Phi([w])=[\cC(w)].
\]
Since $\cC$ is injective, its projectivization is a projective linear closed
embedding.
\end{proof}

\subsection{A concrete nontrivial symbolic computation}
\label{subsec:symbolic-example}

To check the preceding conclusions in a completely reproducible
low-dimensional case, take
\[
  k=4,\qquad q=2,\qquad s=1,\qquad d'=2,\qquad d=5.
\]
For $w=(w_0,w_1,w_2,w_3)\in\cK=\CC^4$,
Definition~\ref{def:convolution} gives
\begin{equation}\label{eq:example-convolution-matrix}
  C_w=
  \begin{pmatrix}
    w_0&w_1&w_2&w_3&0\\
    0&w_0&w_1&w_2&w_3
  \end{pmatrix}
  \in H=\Hom(\CC^5,\CC^2)\cong\CC^{10}.
\end{equation}
Let $E_1,\ldots,E_{10}$ be the standard basis of $H$ in row-major order,
and let $e_1,\ldots,e_4$ be the standard basis of $\cK$. Then
\begin{equation}\label{eq:example-C-basis}
  \cC(e_i)=E_i+E_{i+6},\qquad 1\leq i\leq4.
\end{equation}

Let $p_{ij}$, $1\leq i<j\leq4$, be the source Pl\"ucker coordinates on
$\Gr(2,4)$. They satisfy
\begin{equation}\label{eq:example-source-Plucker}
  p_{12}p_{34}-p_{13}p_{24}+p_{14}p_{23}=0.
\end{equation}
Write $z_{ab}$, $1\leq a<b\leq10$, for the homogeneous coordinates on
$\PP(\wedge^2H)=\PP^{44}$. By \eqref{eq:example-C-basis}, for every
$1\leq i<j\leq4$,
\begin{align}\label{eq:example-wedge-map}
  (\wedge^2\cC)(e_i\wedge e_j)
  &=(E_i+E_{i+6})\wedge(E_j+E_{j+6})\notag\\
  &=E_i\wedge E_j+E_i\wedge E_{j+6}
    -E_j\wedge E_{i+6}+E_{i+6}\wedge E_{j+6}.
\end{align}
Hence the coordinates of the image satisfy
\begin{equation}\label{eq:example-linear-relations}
  z_{ij}=z_{i,j+6}=z_{i+6,j+6}=p_{ij},
  \qquad z_{j,i+6}=-p_{ij},
\end{equation}
and the remaining $21$ coordinates $z_{ab}$ not occurring in
\eqref{eq:example-linear-relations} vanish.

Over the rational field, we used Singular~4.4.1~\cite{Singular441} to form
the graph ideal defined by
\eqref{eq:example-source-Plucker}--\eqref{eq:example-linear-relations} and
then eliminated the six source coordinates $p_{ij}$. The resulting elimination
ideal $I_Y$ is the homogeneous ideal generated by
\begin{enumerate}[label=\textnormal{(\roman*)},leftmargin=2.8em]
  \item the $21$ vanishing coordinates $z_{ab}$ described above;
  \item the three independent linear relations in
        \eqref{eq:example-linear-relations} for each
        $1\leq i<j\leq4$, giving $18$ linear relations in total; and
  \item the single quadratic relation
  \begin{equation}\label{eq:example-target-Plucker}
    z_{12}z_{34}-z_{13}z_{24}+z_{14}z_{23}=0.
  \end{equation}
\end{enumerate}
More precisely, reducing the generators of the elimination ideal by the
standard basis of the expected ideal, and conversely, gives the zero ideal in
both directions; the reduced Gr\"obner basis has $40$ elements. Thus the
calculation recovers the homogeneous image ideal, not only its set of points.

The Hilbert series of the homogeneous coordinate ring simplifies to
\begin{equation}\label{eq:example-Hilbert-series}
  \operatorname{Hilb}_{\CC[z]/I_Y}(t)
  =\frac{1+t}{(1-t)^5}.
\end{equation}
Consequently,
\[
  \dim Y=4=2(4-2),\qquad \deg Y=2.
\]
On the standard chart $p_{12}\neq0$, write
\[
  U=\operatorname{rowspan}
  \begin{pmatrix}
    1&0&a&b\\
    0&1&c&d
  \end{pmatrix}.
\]
Then
\[
  (z_{13},z_{14},z_{23},z_{24})=(c,d,-a,-b),
\]
and therefore
\begin{equation}\label{eq:example-chart-Jacobian}
  \det\frac{\partial(z_{13},z_{14},z_{23},z_{24})}
  {\partial(a,b,c,d)}=1.
\end{equation}
This directly checks $\rank(\dd\Phi)=4$ on this chart. Finally, in the six
independent coordinates, the image is defined only by
\eqref{eq:example-target-Plucker}. All first partial derivatives of this
quadratic vanish simultaneously only when the six coordinates are zero.
Thus the affine cone is singular only at its vertex, and the corresponding
projective image is smooth.

This computation therefore recovers the image ideal and checks the dimension,
full chart rank, and smoothness predicted by the main theorem in the first
concrete convolutional case with a nontrivial Pl\"ucker relation. It is a
low-dimensional symbolic illustration, not a computer proof of the general
theorem. The reproducible script is supplied as
\texttt{convolution\_q2\_k4.sing}.

\section{Applications and Potential Value}

\subsection{Status of the statements in this section}

The results proved above are that $\Phi$ has injective differential everywhere,
$\Phi$ is a closed embedding, $X\cong Y$, and $Y$ is a smooth projective
variety. This section discusses modeling directions that these results may
support, but does not assert the following as consequences of the theorems in
this paper:
\[
  \begin{gathered}
  \text{higher predictive accuracy, better generalization, shorter actual
  running time,}\\
  \text{or greater empirical robustness}.
  \end{gathered}
\]
These properties depend on the particular network, data, optimization algorithm,
and implementation, and must be validated by additional theoretical analysis or
numerical experiments.

\subsection{Change-of-basis redundancy and intrinsic degrees of freedom}

Suppose that a convolutional layer contains $m$ filters
\[
  w_1,\ldots,w_m\in\cK,
\]
and let
\[
  U=\Span(w_1,\ldots,w_m),
  \qquad \dim U=q<m.
\]
Choose a basis $u_1,\ldots,u_q$ of $U$. For every $A\in\GL(q)$, the family
\[
  (v_1,\ldots,v_q)=(u_1,\ldots,u_q)A
\]
is again a basis of $U$. Thus the right action of $\GL(q)$ on a basis matrix does
not change the Grassmann point.

Let $n=\dim\cK$. A full-rank $n\times q$ basis matrix has $nq$ coordinates,
whereas
\[
  \dim\GL(q)=q^2.
\]
Removing the change-of-basis freedom leaves
\[
  nq-q^2=q(n-q)=\dim\Gr(q,\cK).
\]
Thus the Grassmann parametrization removes the intrinsic redundancy caused by a
choice of basis rather than deleting coordinates arbitrarily. This quotient-space
viewpoint is the standard interpretation of the Grassmannian
\cite[Lecture~6, pp.~63--66]{Har92}.

It is important that the point $U$ records only the subspace spanned by the
filters. If the task requires recovering every $w_i$, their coefficients in a
chosen basis must also be stored. The closed-embedding theorem states only that
the complete Pl\"ucker representation uniquely recovers $U$; it does not state
that the discarded ordered family of filters can be recovered.

\subsection{Low-rank filter families and convolutional compression}

Suppose that the filters are approximately contained in a $q$-dimensional
subspace
\[
  U=\Span(u_1,\ldots,u_q),
\]
in the sense that there are coefficients $a_{ij}$ with
\[
  w_i\approx\sum_{j=1}^q a_{ij}u_j.
\]
By linearity of convolution in the filter,
\[
  C_{w_i}
  \approx\sum_{j=1}^q a_{ij}C_{u_j}.
\]
Thus one may first compute $q$ basic convolutional responses and then form the
$m$ approximate responses by linear combinations.

If $n=\dim\cK$, directly storing $m$ filters requires $mn$ scalars. Storing a
family of $q$ basis vectors and an $m\times q$ coefficient matrix requires
\[
  qn+mq=q(n+m)
\]
scalars. Under this elementary count, the latter representation reduces the
number of parameters only if
\begin{equation}\label{eq:compression-threshold}
  q(n+m)<mn,
  \qquad\text{that is,}\qquad
  q<\frac{mn}{m+n}.
\end{equation}
Therefore the use of Grassmann parameters does not automatically yield
compression. One must also prove or observe that the filter family has
sufficiently low effective rank and account for the costs of basis
orthogonalization, coefficient mixing, and the storage format.

Previous work has used channel or filter redundancy in convolutional filters to
construct low-rank separable approximations that accelerate pretrained
convolutional networks~\cite{DZBLF14,JVZ14}. Those works decompose a particular
convolution tensor and report the corresponding experiments, whereas this paper
records the subspace spanned by the filters. The connection is a modeling
motivation, not an equivalence of models or algorithms.

\section{Future Work}

\subsection{Multilayer Grassmann parameter spaces}

If the filter space in layer $i$ is $\cK_i$ and the subspace dimension is $q_i$,
then a formal multilayer parameter space is
\[
  X_L=\prod_{i=0}^{L-1}\Gr(q_i,\cK_i),
\]
with dimension
\[
  \dim X_L
  =\sum_{i=0}^{L-1}q_i(\dim\cK_i-q_i).
\]
If every $q_i=1$, each factor reduces to a projective filter space. This product
alone, however, does not define a function parametrization for a deep network.
The basis-independent output of the first layer naturally carries a factor
$U_0^\vee$, and consequently the input representation of the next layer depends
on the parameters of the preceding layer. A composition between layers that is
compatible with changes of basis in every layer must first be defined before one
can discuss the differential, fibers, and closed-embedding property of the total
map. The single-layer theorem cannot simply be applied layer by layer.

Deep models in which Grassmann data serve as layer inputs or representations
have been constructed using full-rank mappings, reorthogonalization, projection
pooling, and manifold backpropagation~\cite{HWVG18}. In those models a Grassmann
point represents data or an activation subspace, whereas in the present paper a
Grassmann point parametrizes a subspace of convolutional filters. The two
settings cannot be directly identified, but the former provides techniques that
may inform the numerical design of layers.

\subsection{Nonlinear activation and change-of-basis equivariance}

After a basis of $U$ has been chosen, let the channel vector be $z\in\CC^q$. A
change of basis replaces its coordinates by $Gz$, where $G\in\GL(q)$. For the
coordinatewise power activation
\[
  \sigma_r(z_1,\ldots,z_q)=(z_1^r,\ldots,z_q^r),
\]
one generally has
\[
  \sigma_r(Gz)\neq G\sigma_r(z).
\]
For example, if $r=2$ and
\[
  z=\begin{pmatrix}1\\1\end{pmatrix},
  \qquad
  G=\begin{pmatrix}1&1\\0&1\end{pmatrix},
\]
then
\[
  \sigma_2(Gz)=\begin{pmatrix}4\\1\end{pmatrix},
  \qquad
  G\sigma_2(z)=\begin{pmatrix}2\\1\end{pmatrix}.
\]
Thus an ordinary coordinatewise activation depends on the chosen basis and does
not automatically descend to a map that depends only on the Grassmann point.

One candidate intrinsic construction is the symmetric-tensor map
\[
  \nu_r:W\longrightarrow\Sym^r(W),
  \qquad z\longmapsto z^{\odot r},
\]
because, for every linear map $G:W\to W'$,
\[
  \Sym^r(G)(z^{\odot r})=(Gz)^{\odot r}.
\]
This construction, however, would make later layers act on parameter-dependent
symmetric-tensor spaces, so the total parametrization would have to be rebuilt.
Equivariant networks for general matrix groups require both linear and nonlinear
layers to accommodate the relevant group representations and may use gated or
tensor-product nonlinearities~\cite{FWW21}. These methods provide only design
directions and do not themselves prove a multilayer extension of the present
model.

\section{Limitations}

The results of this paper have the following explicit boundaries.

First, we study the complete Pl\"ucker representation of an operator subspace,
not the full network function class obtained after arbitrary activation,
readout, and classification layers. If the readout does not have the appropriate
$\GL(q)$ invariance or equivariance, the output will depend on the choice of
basis.

Second, the proof of injectivity concerns the full input space and the valid
convolution of Definition~\ref{def:convolution}. If the boundary conditions are
changed, output coordinates are deleted, the input class is restricted, or
redundant filter parameters are used, one must prove $\ker\cC=0$ again.

Third, the theory is formulated over complex algebraic varieties. Practical
optimization is usually carried out on real Grassmann manifolds. Although the
linear-algebra formulas can be restricted to the real field, the topology of the
real points, numerical stability, and optimization dynamics are not direct
consequences of a complex-algebraic closed-embedding result.

Fourth, the ambient vector space for the Pl\"ucker representation has dimension
\[
  \dim\wedge^qH=\binom{\dim H}{q}.
\]
Even using $Y\subseteq\PP(\wedge^qW)$ and $\dim W=k$, the number of coordinates
can still be $\binom{k}{q}$. Pl\"ucker coordinates are therefore well suited to
theoretical analysis, but in large-scale numerical computation they may be less
economical than orthonormal basis matrices or projection matrices.

Fifth, we fix the subspace dimension $q$ and do not treat changes in effective
rank during training or prove how to select $q$ automatically from data. We
provide only one low-dimensional symbolic computation and no systematic
training or data experiments. We therefore do not claim that the model has
already improved accuracy, generalization, computational complexity, or
robustness.

\section{Conclusion}

Starting from the concrete one-dimensional finite-stride convolution
\[
  (C_wx)_i=\sum_{j=0}^{k-1}w_jx_{si+j},
\]
we proved that the linear filter-to-convolution-operator map
\[
  \cC:\cK\longrightarrow H
\]
is injective. Consequently, for every $q$-dimensional filter subspace $U$, the
operator image $\cC(U)$ is again $q$-dimensional, giving the parametrization
\[
  \Phi=\Pl_H\circ\Gamma_{\cC}.
\]

Locally, using the natural isomorphism
\[
  T_U\Gr(q,\cK)\cong\Hom(U,\cK/U)
\]
and the differential formula
\[
  \dd\Gamma_{\cC}|_U(A)
  =\overline{\cC}_U\circ A\circ(\cC|_U)^{-1},
\]
we proved that $\dd\Gamma_{\cC}|_U$ is injective. The classical Pl\"ucker
closed-embedding theorem, together with the general fact that closed immersions
induce injective tangent maps, gives the injectivity of $\dd\Pl_H$, and hence
\[
  \rank(\dd\Phi|_U)=q(k-q).
\]

Globally, let $W=\cC(\cK)$. The linear isomorphism $\cK\cong W$ induces
\[
  \Gr(q,\cK)\cong\Gr(q,W).
\]
Using Pl\"ucker coordinates, we proved that the image of
\[
  \Gr(q,W)\hookrightarrow\Gr(q,H)
\]
is defined exactly by the vanishing of all Pl\"ucker coordinates containing an
index external to $W$, and that the map is $A\mapsto(A,0)$ in standard affine
coordinates. It is therefore a closed embedding, and consequently $\Phi$ is a
closed embedding.

The resulting chain of strict implications is
\[
  \begin{gathered}
  \cC\text{ is injective}
  \Longrightarrow
  \Phi\text{ is a closed embedding}
  \Longrightarrow
  \Phi:X\xrightarrow{\sim}Y\\
  \Longrightarrow
  \begin{cases}
    \Phi\text{ is finite and birational onto its image},\\
    \#\Phi^{-1}(y)=1,\\
    \dim Y=q(k-q),\\
    \Sing(Y)=\varnothing.
  \end{cases}
  \end{gathered}
\]

These conclusions apply to the single-layer linear subspace of convolution
operators defined in this paper and to its complete Pl\"ucker representation.
If nonlinear activation, multilayer composition, output projection, or
incomplete coordinate observation is introduced, then well-definedness of the
parametrization, injectivity of the convolution map, and the differential and
fiber structures must be checked again. The closed-embedding result in this
paper cannot be transferred to such models without new proofs.

As a concrete check of the general conclusions, we also performed a Singular
elimination computation for $k=4,q=2,d=5,d'=2,s=1$. It directly recovered the
image ideal generated by $21$ coordinate vanishings, $18$ linear
identifications, and one Klein quadratic, and yielded $\dim Y=4$,
$\deg Y=2$, full differential rank on the standard chart, and
$\Sing(Y)=\varnothing$, in complete agreement with the main theorem.

\section*{Declaration on the Use of Generative Artificial Intelligence}

In accordance with the current SIAM editorial policy on artificial
intelligence~\cite{SIAMAI26}, the authors make the following declaration.
Generative artificial intelligence tools, including OpenAI ChatGPT and Codex,
were used to assist with the organization and linguistic revision of the
manuscript, the drafting and revision of mathematical exposition and arguments,
literature-search support, bibliographic preparation, and \LaTeX{} formatting.
All mathematical statements, proofs, citations, and bibliography entries
included in the final manuscript were independently reviewed and verified by the
authors. The authors assume responsibility for all content.

\section{Acknowledgement}
	
	On behalf of all authors, the corresponding author states that there is no conflict of interest.
H. Zuo acknowledges support from NSFC (grant No. 12671056) and BJNSF (grant No. 1252009).

\end{document}